\numberwithin{equation}{section}
\newcommand{\ma}{\mathcal}
\newcommand{\mf}{\mathfrak}
\newcommand{\m}{\CMcal}
\newcommand{\sK}{\mathcal{K}}
\theoremstyle{plain}
\newtheorem{theorem}{Theorem}[section]
\newtheorem{corollary}[theorem]{Corollary}
\newtheorem{proposition}[theorem]{Proposition}
\newtheorem{definition}[theorem]{Definition}
\begin{document}

\begin{center}
 \Large\bf{$\mathfrak{K}$-families and CPD-H-extendable families}
\end{center}

\vspace{1cm}

\begin{center} Santanu Dey and Harsh Trivedi
\end{center}

\vspace{1cm}

\vspace{0.5cm} {\bf Abstract:}  {We introduce,  for any set $S$, the concept of $\mf{K}$-family between two Hilbert $C^*$-modules over two $C^*$-algebras,
for a given completely positive definite (CPD-) kernel  $\mf{K}$ over $S$ between those $C^*$-algebras and  obtain a factorization theorem for such $\mf{K}$-families. If $\mf{K}$
is a CPD-kernel and $E$ is a full Hilbert $C^*$-module, then any
$\mf{K}$-family which is covariant with respect to a dynamical system $(G,\eta,E)$ on $E$, extends to a $\tilde{\mf{K}}$-family
on the crossed product $E \times_\eta G$, where $\tilde{\mf{K}}$ is a CPD-kernel.  Several characterizations
of $\mf{K}$-families, under the assumption that ${E}$ is full, are obtained
and covariant versions of these results are also given. One of these
characterizations says that such $\mf{K}$-families extend as CPD-kernels,
between associated (extended) linking algebras, whose $(2,2)$-corner is a
homomorphism and vice versa. We discuss a dilation theory of CPD-kernels
in relation to $\mf{K}$-families.}

\vspace{0.5cm}
\noindent {\bf MSC:2010} 46L07, 46L08, 46L53, 46L55 \\
\noindent {\bf keywords:} completely positive definite kernels, dilations, crossed product, Hilbert $C^*$-modules, Kolmogorov decomposition, linking algebras

\vspace{0.5cm}

\section{Introduction}

Let $\m B$ be a $C^*$-algebra and ${E}$ be a vector space which is a
right $\m B$-module satisfying $\alpha(xb)=(\alpha x)b=x(\alpha b)$
for $x\in {E},b\in \m B,\alpha\in\mathbb{C}$. The space ${E}$ is called an
{\it inner-product $\m B$-module} if there exists a mapping $\langle
\cdot,\cdot \rangle : E \times E \to \m{B}$ such that
\begin{itemize}
 \item [(i)] $\langle x,x \rangle \geq 0 ~\mbox{for}~ x \in {E}  $ and $\langle x,x \rangle = 0$ only if $x = 0 ,$
\item [(ii)] $\langle x,yb \rangle = \langle x,y \rangle b ~\mbox{for}~ x,y \in {E}$ and  $~\mbox{for}~ b\in \m B,  $
 \item [(iii)]$\langle x,y \rangle=\langle y,x \rangle ^*~\mbox{for}~ x ,y\in {E} ,$
\item [(iv)]$\langle x,\mu y+\nu z \rangle = \mu \langle x,y \rangle +\nu \langle x,z \rangle ~\mbox{for}~ x,y,z \in {E} $ and for $\mu,\nu \in \mathbb{C}$.
\end{itemize}
 An inner-product $\m B$-module ${E}$
which is complete with respect to the norm  $$\| x\| :=\|\langle
x,x\rangle\|^{1/2} ~\mbox{for}~ x \in {E}$$ 
is called a \textit{Hilbert $\m B$-module} or \textit{Hilbert $C^{*}$-module
over $\m B$}. It is said to be {\it full}
if the closure of the linear span of $\{\langle x,y\rangle:x,y\in{E}\}$ equals
$\m B$. We use the symbol [S] for the closure of the linear span of any set $S.$
Also for each $x\in{E}$ we use notation $|x|$ to denote $\langle
x,x\rangle^{1/2}$. Paschke and Rieffel (cf. \cite{Ri74}, \cite{Pas73}) contributed to the theory of
Hilbert $C^*$-modules immensely in early 1970s and it finds applications in the classification of $C^*$-algebras,
the dilation theory of semigroups of completely positive maps, the theory of quantum groups, etc..

Apart from the notion of Hilbert $C^*$-module, the property of complete positivity is a key concept needed in this article.
  A linear mapping $\tau$ from a $C^*$-algebra $\m B$ to a $C^*$-algebra $\m C$ is called {\it completely positive} if
for each $n\in \mathbb N$, $\sum_{i,j=1}^n c_j^{*}
\tau(b_j^{*}b_i)c_i\geq 0$ where $b_1,b_2,\ldots,b_n$ are from $\m
B$ and $c_1,c_2,\ldots,c_n$ are from $\m C$. The theory of completely positive
maps plays an important role in operator algebras, quantum statistical mechanics,  quantum information theory, etc.. Completely positive maps
between unital $C^*$-algebras are characterized by the Paschke's
GNS construction (cf. Theorem 5.2, \cite{Pas73}).
Let ${E}$ be a Hilbert $\m B$-module, ${F}$ be a Hilbert $\m
C$-module and $\tau$ be a linear map from $\m B$ to $\m C$. A map
${T}:{E}\to{F}$ is called {\it $\tau$-map} if
$$\langle {T}(x),{T}(y)\rangle=\tau(\langle x,y\rangle)~\mbox{for all} ~x,y\in{E}.$$ Skeide in
\cite{Sk12} developed a factorization theorem for $\tau$-maps when
$\tau$ is completely positive based on Paschke's GNS contruction. This theorem
generalizes the Stinespring type theorem for Hilbert
$C^*$-modules due to Bhat, Ramesh and Sumesh (cf. \cite{BRS12}). Certain related covariant versions 
of this theorem have been explored in \cite{J11} and \cite{He99}.

The following definition of completely positive definite (CPD-) kernels on arbitrary set $S$, which plays a crucial role in exploring the 
theory of CPD-semigroups over $S$, is from \cite{BBLS04}:
\begin{definition}
Let $\m B$ and $\m C$ be $C^*$-algebras. By $\ma
B(\m B,\m C)$ we denote the set of all bounded linear maps from  $\m B$ to $\m C$. For a set $S$
we say that a mapping $\mf{K}:S\times S \to \ma B(\m B,\m C)$ is a {\rm completely positive definite kernel} or a {\rm CPD-kernel} over $S$ from $\m B$ to $\m C$ if
\[
 \sum_{i,j} c^*_i \mf{K}^{\sigma_i, \sigma_j} (b^*_i b_j) c_j \geq 0~\mbox{for all finite choices of $\sigma_i\in S$, $b_i\in \m B$, $c_i\in \m C$. }~
\]
\end{definition}
The notion of a completely multi-positive map  which is introduced in \cite{He99} is an example of a CPD-kernel  over the finite set $S = \{1,\ldots, n\}$. 
 CPD-kernels over the set $S = \{0, 1\}$ and semigroups of CPD-kernels were first studied by Accardi and Kozyrev in \cite{AK01}. 
Motivated by the definition of $\tau$-map, we define $\mf{K}$-family, where $\mf{K}$ is a CPD-kernel, in Section \ref{sec1}. 
Some of the results about $\tau$-maps from \cite{Sk12} and \cite{SSu14} are extended to $\mf{K}$-families in this article.

In Section \ref{sec1}, for a CPD-kernel $\mf{K}$ we  show that
any $\mf{K}$-family $\{\ma K^{\sigma}\}_{\sigma \in S}$ factorizes in terms of a $C^*$-correspondence $\m F$, a mapping from the set $S$ to $\m F$ and an isometry, if the corresponding $C^*$-algebras
are assumed to be unital. The factorization result is a Stinespring type theorem. Further, we prove a covariant version of this
theorem in terms of the following notions: Let $G$ be a locally compact group and let $\m B$ be a $C^*$-algebra. We call a group homomorphism $\alpha:G\to Aut(\m B)$
an {\it action of $G$ on $\m B$}. If $t\mapsto \alpha_{t}(b)$ is
continuous for all $b\in\m B$, then we call $(G,\alpha,\m B)$ a {\it
$C^*$-dynamical system}. We denote by $\m U \m B$  the group
of all unitary elements of the $C^*$-algebra $\m B$ and use symbol
$\alpha_t$ for the image of $t\in G$ under $\alpha$.

\begin{definition}
Let $S$ be a set and let $ \mf{K}:S\times S\to \ma B(\m B,\m C)$ be
a kernel over $S$ with values in the bounded maps from a
$C^*$-algebra $\m B$ to a unital $C^*$-algebra $\m C$.  Let $u:G\to
\m U\m C$ be a unitary representation of a locally compact group $G$. The kernel $\mf{K}$ is called {\rm $u$-covariant} with respect to the  $(G,\alpha,\m B)$ if for all
$\sigma,\sigma'\in S$
\[
 \mf{K}^{\sigma,\sigma'}(\alpha_t(b))=u_t \mf{K}^{\sigma,\sigma'}(b) u^*_t~\mbox{for}~b\in\m B,~t\in G.
\]
\end{definition}

Let ${E}$ and ${F}$ be Hilbert $C^*$-modules over a $C^*$-algebra
$\m B$. A map $T:{E}\to{F}$ is called {\it adjointable} if there
exists a map $T':{F}\to{E}$ such that
 \[
 \langle T (x),y\rangle =\langle x,T'(y) \rangle~\mbox{for all}~x\in {E}, y\in {F}.
 \]
The map $T'$ is unique for each $T$ and we denote it by $T^{*}$. We denote
the set of all adjointable maps from ${E}$ to ${F}$ by $\ma B^a
({E},F)$, and if ${E}={F}$, then we denote by $\ma B^a ({E})$ the space $\ma B^a
({E},{E})$. The set of all bounded right linear maps from
$E$ into $F$ will be denoted by $\ma B^r (E,F)$.
Let  ${E}$ be a Hilbert $\m
B$-module and let ${F}$ be a Hilbert $\m C$-module. A map
$\Psi:E\to{F}$  is said to be a {\it morphism of Hilbert
$C^*$-modules} if there exists a $C^*$-algebra homomorphism $\psi:\m
B\to\m C$ such that
$$\langle \Psi(x),\Psi(y)\rangle=\psi(\langle
x,y\rangle)~\mbox{for all}~ x,y\in{E}.$$ 
If ${E}$ is full, then
$\psi$ is unique for $\Psi$. A bijective map $\Psi:{E}\to{F}$ is
called an {\it isomorphism of Hilbert $C^*$-modules} if $\Psi$ and
$\Psi^{-1}$ are morphisms of Hilbert $C^*$-modules. We denote the
group of all isomorphisms of Hilbert $C^*$-modules from ${E}$ to
itself by $Aut({E})$.

\begin{definition}\label{def8}
Let $G$ be a locally compact group and let $\m B$ be a
$C^*$-algebra. Let $E$ be a full Hilbert $\m B$-module. A group
homomorphism $t\mapsto\eta_{t}$ from $G$ to $Aut({E})$ is called a
{\rm continuous  action of $G$ on ${E}$} if $t\mapsto \eta_{t}(x)$
from $G$ to ${E}$ is continuous for each $x\in E$. In this case we
call the triple $(G,\eta,{E})$ a {\rm dynamical system on the
Hilbert $\m B$-module ${E}$}.

\end{definition}

Any $C^*$-dynamical system $(G,\alpha,\m B)$ can be regarded as a dynamical system on the
Hilbert $\m B$-module $\m B$.  In Section \ref{sec1} we also examine the extendability of covariant $\mf{K}$-families with respect to any dynamical system $(G,\eta,{E})$ on a
Hilbert $C^*$-module $E$ to the crossed product Hilbert $C^*$-module $E \times_\eta G$.
Let $E^*:=\{x^*: x\in E\}\subset \ma B^a ({E}, \m B )$ where
$x^* y:=\langle x,y\rangle$ for all $x,y\in E$.  Then $\ma K(E):=[EE^*]$ is a
$C^*$-subalgebra of $\ma B^a ({E})$. Indeed, $E^*$ is a Hilbert $\ma
K(E)$-module where $\langle x^*,y^*\rangle:= xy^*$ for all $x,y\in
E$.  The (extended) {\it linking algebra} of $E$ is defined by
$$\m
L_{{E}}:=
\begin{pmatrix}
{\m B} & {{{E}}^*}  \\
{{E}} & \ma B^a ({E})
\end{pmatrix} \subset \ma B^a (\m B\oplus {E} ) .$$
(cf. \cite{Sk01}). It is shown in Section \ref{sec2} that for any CPD-kernel $\mf{K}$, the  $\mf{K}$-family on full Hilbert
$C^*$-modules is same as the set of maps defined on the  Hilbert $C^*$-modules which extend as a CPD-kernel between their linking algebras. A characterization of 
such  $\mf{K}$-families is obtained in terms of completely bounded  maps between certain Hilbert $C^*$-modules. We derive the covariant 
versions of the above results too.  
In Section \ref{sec3}, as an application of our theory we propose and explore a new dilation theory of any CPD-kernel $\mf{K}$ associated to a family of maps between certain Hilbert $C^*$-modules. This dilation is called a CPDH-dilation and under additional assumptions, the family of maps between the Hilbert $C^*$-modules becomes a $\mf{K}$-family.

\section{$\mf{K}$-families and crossed products of Hilbert \\ $C^*$-modules}\label{sec1}

\begin{definition}
   Let ${E}$ and ${F}$ be Hilbert $C^*$-modules over $C^*$-algebras $\m B$ and $\m C$ respectively. Let $S$ be a set and let $ \mf{K}:S\times S\to \ma B(\m B,\m C)$ be a kernel. 
Let $\ma K^{\sigma}$ be a map from $E$ to $F$ for each $\sigma\in S$. The family $\{\ma K^{\sigma}\}_{\sigma\in S}$ is called {\rm $\mf{K}$-family} if
   \[
    \langle \ma K^{\sigma} (x),\ma K^{{\sigma}'}(x')\rangle=\mf{K}^{\sigma,\sigma'}(\langle x, x'\rangle),~\mbox{for}~x,x' \in E,~\sigma,\sigma'\in S.
   \]

\end{definition}

Let $\m A$ and $\m B$ be  $C^*$-algebras. We recall that a {\it $C^*$-correspondence} from $\m
A$ to $\m B$ is defined as a right Hilbert $\m B$-module $E$ together with a
$*$-homomorphism $\phi:\m A\to \ma B^a(E)$ where $\ma B^a(E)$ is the
set of all adjointable operators on $E$. The left action of $\m A$
on $E$ given by $\phi$ is defined as
\[
ay:=\phi(a)y~\mbox{for all}~a\in \m A,y\in E.
\]
The following theorem  deals with the factorization of 
$\mf{K}$-families:
\begin{theorem}\label{thm1}
 Let $\m B$ and $\m C$ be $C^*$-algebras where $\m B$ is unital. Let ${E}$ and ${F}$ be Hilbert $C^*$-modules over $\m B$ and $\m C$ respectively, 
and $S$ be a set. If $\ma K^{\sigma}$ is a map from $E$ to $F$ for each $\sigma\in S$, 
then the following conditions are equivalent:
  \begin{itemize}
 \item [(i)] $\{\ma K^{\sigma}\}_{\sigma\in S}$ is a $\mf{K}$-family where $\mf{K}:S\times S\to \ma B(\m B,\m C)$ is a CPD-kernel.
 \item [(ii)] There exists a pair $(\m F, \mf{i})$ consisting of a $C^*$-correspondence $\m F$ from $\m B$ to $\m C$ and 
a map $\mf{i}: S\to \m F$, and there exists an isometry $
\nu:{E}\bigodot\m F\to{F}$ such that
\begin{align}\label{eqn*}
  \nu(x\odot \mf{i}(\sigma))=\ma{K^{\sigma}}(x)~\mbox{for all}~x\in{E},~\sigma\in S.
 \end{align}\end{itemize}
\end{theorem}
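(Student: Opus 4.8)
I would establish the two implications separately; $(ii)\Rightarrow(i)$ is routine and $(i)\Rightarrow(ii)$ carries the content.

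For $(ii)\Rightarrow(i)$, write $\phi:\m B\to\ma B^a(\m F)$ for the left action of $\m F$ and set $\mf{K}^{\sigma,\sigma'}(b):=\langle\mf{i}(\sigma),\phi(b)\mf{i}(\sigma')\rangle$ for $b\in\m B$. Each $\mf{K}^{\sigma,\sigma'}$ is bounded since $\|\phi(b)\|\le\|b\|$, and $\mf{K}$ is a CPD-kernel because $\phi$ is a $*$-homomorphism, so that $\sum_{i,j}c_i^*\mf{K}^{\sigma_i,\sigma_j}(b_i^*b_j)c_j=\big\langle\sum_i\phi(b_i)\mf{i}(\sigma_i)c_i,\sum_j\phi(b_j)\mf{i}(\sigma_j)c_j\big\rangle\ge 0$. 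Finally, by the inner-product formula of the interior tensor product and the fact that an isometry preserves inner products,
\[
\langle\ma K^{\sigma}(x),\ma K^{\sigma'}(x')\rangle=\langle x\odot\mf{i}(\sigma),x'\odot\mf{i}(\sigma')\rangle=\langle\mf{i}(\sigma),\phi(\langle x,x'\rangle)\mf{i}(\sigma')\rangle=\mf{K}^{\sigma,\sigma'}(\langle x,x'\rangle),
\]
so $\{\ma K^{\sigma}\}_{\sigma\in S}$ is a $\mf{K}$-family.

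For $(i)\Rightarrow(ii)$, the first step is a Kolmogorov/GNS-type construction attached to the CPD-kernel $\mf{K}$, analogous to Paschke's GNS construction and Skeide's factorisation of $\tau$-maps. On the algebraic linear span $\ma M_{0}$ of symbols $b\otimes_\sigma c$ ($b\in\m B,\sigma\in S,c\in\m C$) put the $\m C$-valued sesquilinear form determined by $\langle b\otimes_\sigma c,\,b'\otimes_{\sigma'}c'\rangle:=c^*\,\mf{K}^{\sigma,\sigma'}(b^*b')\,c'$; its positive semidefiniteness is precisely the defining inequality of a CPD-kernel. Quotienting by the length-zero vectors and completing gives a Hilbert $\m C$-module $\m F$, on which a left action is defined by $\phi(a)(b\otimes_\sigma c):=(ab)\otimes_\sigma c$. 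One verifies $\phi(a)^*=\phi(a^*)$ and --- this is the only point where unitality of $\m B$ is used --- that $\langle\phi(a)\xi,\phi(a)\xi\rangle\le\|a\|^2\langle\xi,\xi\rangle$ via $\|a\|^2 1_{\m B}-a^*a=d^*d$, so $\phi$ descends to a $*$-homomorphism $\m B\to\ma B^a(\m F)$ making $(\m F,\phi)$ a $C^*$-correspondence from $\m B$ to $\m C$. Putting $\mf{i}(\sigma):=$ the class of $1_{\m B}\otimes_\sigma 1_{\m C}$ yields $\langle\mf{i}(\sigma),\phi(b)\mf{i}(\sigma')\rangle=\mf{K}^{\sigma,\sigma'}(b)$ and $\m F=[\phi(\m B)\mf{i}(S)\m C]$. (If $\m C$ is non-unital one carries out this construction over $\widetilde{\m C}$ --- $\mf{K}$ remains a CPD-kernel, positivity of the relevant matrices being unchanged by the unitisation --- and then passes to the Hilbert $\m C$-submodule $[\phi(\m B)\mf{i}(S)\m C]$, which still contains each $\mf{i}(\sigma)$ by an approximate-identity argument.)

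It remains to produce $\nu$. Because $\m F=[\phi(\m B)\mf{i}(S)\m C]$ and $x\odot\phi(b)\eta=(xb)\odot\eta$ in the interior tensor product, $E\bigodot\m F$ is the closed linear span of the vectors $(x\odot\mf{i}(\sigma))c$ with $x\in E,\sigma\in S,c\in\m C$. On finite linear combinations of these, set $\nu\big(\sum_k(x_k\odot\mf{i}(\sigma_k))c_k\big):=\sum_k\ma K^{\sigma_k}(x_k)c_k$. Then
\[
\Big\langle\textstyle\sum_k(x_k\odot\mf{i}(\sigma_k))c_k,\ \sum_l(x_l\odot\mf{i}(\sigma_l))c_l\Big\rangle=\textstyle\sum_{k,l}c_k^*\,\mf{K}^{\sigma_k,\sigma_l}(\langle x_k,x_l\rangle)\,c_l=\Big\langle\textstyle\sum_k\ma K^{\sigma_k}(x_k)c_k,\ \sum_l\ma K^{\sigma_l}(x_l)c_l\Big\rangle,
\]
the first equality by $\langle\mf{i}(\sigma),\phi(b)\mf{i}(\sigma')\rangle=\mf{K}^{\sigma,\sigma'}(b)$ and the second because $\{\ma K^{\sigma}\}_{\sigma\in S}$ is a $\mf{K}$-family; this simultaneously shows $\nu$ is well defined and inner-product preserving, hence extends to an isometry $\nu:E\bigodot\m F\to F$. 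Taking a limit along an approximate identity of $\m C$ if necessary gives $\nu(x\odot\mf{i}(\sigma))=\ma K^{\sigma}(x)$, which is \eqref{eqn*}.

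The crux, and the only genuinely nontrivial part, is the Kolmogorov/GNS step for $\mf{K}$: establishing that the left action $\phi$ is well defined and norm-decreasing on the quotient--completion, which is exactly where the hypothesis that $\m B$ be unital (or have an approximate identity) is needed, plus the minor technical adjustment for a non-unital $\m C$. Everything downstream of the identity $\langle\mf{i}(\sigma),\phi(b)\mf{i}(\sigma')\rangle=\mf{K}^{\sigma,\sigma'}(b)$ is formal bookkeeping with interior tensor products.
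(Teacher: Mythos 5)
Your proof is correct and follows essentially the same route as the paper: the $(ii)\Rightarrow(i)$ computation is identical, and for $(i)\Rightarrow(ii)$ the paper simply invokes the Kolmogorov decomposition of $\mf{K}$ (Theorem 3.2.3 of \cite{BBLS04}) where you construct it by hand via the GNS-type quotient--completion, then both arguments define $\nu$ on the generating vectors and verify it preserves inner products. Your added care about well-definedness of $\nu$ and about recovering $\nu(x\odot\mf{i}(\sigma))=\ma K^\sigma(x)$ via an approximate identity of $\m C$ fills in details the paper leaves implicit, but does not change the argument.
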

\begin{proof}
Suppose (ii) is given. For each $\sigma, \sigma'\in S$ we define
$\mf{K}^{\sigma,\sigma'}:\m B\to\m C$ by
$\mf{K}^{\sigma,\sigma'}(b):= \langle \mf{i}(\sigma), b
\mf{i}(\sigma')\rangle$ for $b\in \m B$. The mapping $\mf{K}$ is a CPD-kernel,
for
\begin{align*}
  \sum_{i,j} c^*_i \mf{K}^{\sigma_i, \sigma_j} (b^*_i b_j) c_j
 =\sum_{i,j} c^*_i \langle \mf{i}(\sigma_i), b^*_i b_j
\mf{i}(\sigma_j)\rangle  c_j  = \left<\sum_{i}b_i\mf{i}(\sigma_i)c_i
, \sum_{j} b_j \mf{i}(\sigma_j)c_j\right>
 \geq 0
\end{align*}
for all finite choices of $\sigma_i\in S$, $b_i\in \m B$, $c_i\in \m
C$. Further for $x,x' \in E,~\sigma,\sigma'\in S$ we have

   \[
    \langle \ma K^{\sigma} (x),\ma K^{{\sigma}'}(x')\rangle=
     \langle \nu(x\odot\mf{i}(\sigma)) ,\nu(x'\odot\mf{i}(\sigma'))\rangle=\mf{K}^{\sigma,\sigma'}(\langle x, x'\rangle).
   \]
So $\{\ma K^{\sigma}\}_{\sigma\in S}$ is a $\mf{K}$-family, i.e., (i) holds.

Conversely, suppose (i) is given. By Kolmogorov decomposition for $\mf{K}$ (cf.
Theorem 3.2.3 of \cite{BBLS04} and Theorem 4.2 of \cite{Sk11}) we get a
pair $(\m F, \mf{i})$ consisting of a $C^*$-correspondence $\m F$
from $\m B$ to $\m C$ and a map $\mf{i}: S\to \m F$ such that
$\m F=[\{b\mf{i}(\sigma)c:b\in\m B,c\in\m C,\sigma\in S\}]$. We use
symbol $E\bigodot\m F$ for the interior tensor product of ${E}$ and
$\m F$.  Define a linear map $\nu:{E} \bigodot\m F\to{F}$ by
$\nu(x\odot b\mf{i}(\sigma) c):={\ma K^\sigma}(xb)c~\mbox{for
all}~x\in{E},~b\in\m B,~c\in\m C,~\sigma\in S$. We have
\begin{eqnarray*}
 \langle \nu(x\odot b\mf{i}(\sigma) c), \nu(x' \odot b'\mf{i}(\sigma') c')\rangle &=& \langle\ma K^\sigma (xb)c,\ma K^{\sigma'}(x' b')c'\rangle=c^*\mf{K}^{\sigma,\sigma'}(\langle xb,x' b'\rangle)c'\\ &=&\langle \mf{i}(\sigma) c,(\langle xb,x' b'\rangle) \mf{i}(\sigma') c'\rangle =\langle x\odot b\mf{i}(\sigma) c,x'\odot b'\mf{i}(\sigma') c'\rangle
\end{eqnarray*}
for all $x,x'\in{E}$, $b,b'\in\m B$, $c,c'\in\m C$ and
$\sigma,\sigma'\in S$. Hence $\nu$ is an isometry satisfying
equation \ref{eqn*}. This proves ``(i) $\Rightarrow$ (ii)''
\end{proof}

We now examine the covariant version of the above theorem.
If $(G,\eta,{E})$ is a dynamical system on a full Hilbert $\m B$-module ${E}$, then there exists unique $C^*$-dynamical system
$(G,\alpha^{\eta},\m B)$ (cf. p.806 of \cite{J11}) such that
$$\alpha^{\eta}_t(\langle x,y \rangle)=\langle {\eta}_{t}(x),
{\eta}_{t}(y)\rangle~\mbox{for all}~x,y\in {E}~\mbox{and}~t\in G.$$
Moreover, for all $x\in {E}$ and $b\in \m B$ we
have $\eta_t (xb)=\eta_t(x) \alpha^{\eta}_t (b)$.

\begin{definition}
Let $\m C$, $\m D$ be unital $C^*$-algebras, and let $u:G\to \m U\m
C$ and $u':G\to \m U\m D$ be unitary representations on a locally
compact group $G$. Let ${E}$ be a full Hilbert $C^*$-module over a
$C^*$-algebra $\m B$ and let ${F}$ be a $C^*$-correspondence from
$\m D$ to $\m C$. Let $S$ be a set and  $(G,\eta,{E})$ be a
dynamical system on $E$. Consider bounded linear maps $\ma K^\sigma
:{E}\to{F}$ for $\sigma\in S$. Then the family $\{\ma
K^\sigma\}_{\sigma\in S}$ is called {\rm $(u',u)$-covariant with
respect to the dynamical system $(G,\eta,{E})$} if
$$\ma K^\sigma({\eta_t}({x}))=u'_t \ma K^\sigma(x)u^*_t~\mbox{for each $t\in G$, $\sigma\in S$ and ${x}\in{E}$.}$$

\end{definition}

\begin{theorem}\label{thm2}
  Let $u:G\to \m U\m C$, $u':G\to \m U\m D$ be unitary representations of a locally compact group $G$ on unital $C^*$-algebras $\m C$ and $\m D$ respectively. 
Let ${E}$ be a full Hilbert $C^*$-module over a unital $C^*$-algebra $\m B$,  ${F}$ be a
$C^*$-correspondence from $\m D$ to $\m C$ and $S$ be a set. Let $\ma
K^{\sigma}$ be a map from $E$ to $F$ for each $\sigma\in S$. If
$(G,\eta,{E})$ is a dynamical system on ${E}$, then the following
conditions are equivalent:
 \begin{itemize}
\item [(i)] $\{\ma K^{\sigma}\}_{\sigma\in S}$ is a $(u',u)$-covariant $\mf{K}$-family with respect to the
dynamical system $(G,\eta,{E})$ where $\mf{K}:S\times S\to \ma B(\m B,\m C)$ is a CPD-kernel.
 \item [(ii)] There exists a pair $(\m F, \mf{i})$ consisting of a $C^*$-correspondence $\m F$ from $\m B$ to $\m C$ and a map $\mf{i}: S\to \m F$, an isometry $
\nu:{E}\bigodot\m F\to{F}$ such that
\[
  \nu(x\odot \mf{i}(\sigma))=\ma{K^{\sigma}}(x)~\mbox{for all}~x\in{E}, \sigma \in S,
 \]
and  unitary representations $v:G\to \m U \ma B^a (\m F)$
and $w':G\to \m U \ma B^a ({E}\bigodot \m F)$ such that
\begin{enumerate}
\item [(a)] $\pi(\alpha^{\eta}_{t}(b))=v_{t}\pi(b)v^{*}_{t}$ for all $b\in\m B,t\in G,$
\item [(b)] $v_{t}\mf{i}(\sigma) =\mf{i}(\sigma)  u_{t}$ for all $t\in G$ and $\sigma\in S$,
\item [(c)]$w'_t(x\odot b\mf{i}(\sigma) c):=\eta_t(x)\odot v_t (b\mf{i}(\sigma) c)$ for all $b\in\m B$, $c\in \m C$, $x\in{E}$, $\sigma\in S$ and $t\in G$,

\item [(d)] $\nu w'_t=u'_t\nu$ for all $t\in G.$
\end{enumerate}
\end{itemize}
\end{theorem}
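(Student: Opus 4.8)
The plan is to run the covariant version of the argument from Theorem \ref{thm1}, carrying the unitary representations through the Kolmogorov decomposition. Suppose (i) holds. First I would invoke Theorem \ref{thm1} to produce the pair $(\m F,\mf{i})$ with $\m F=[\{b\mf{i}(\sigma)c:b\in\m B,c\in\m C,\sigma\in S\}]$ and the isometry $\nu:E\bigodot\m F\to F$ satisfying $\nu(x\odot\mf{i}(\sigma))=\ma K^\sigma(x)$. Note that since $E$ is full and $\ma K$ is a $(u',u)$-covariant $\mf{K}$-family, the CPD-kernel $\mf{K}$ is itself $u$-covariant with respect to $(G,\alpha^\eta,\m B)$: indeed $\mf{K}^{\sigma,\sigma'}(\alpha^\eta_t(\langle x,x'\rangle)) = \langle \ma K^\sigma(\eta_t x),\ma K^{\sigma'}(\eta_t x')\rangle = u'_t\langle \ma K^\sigma x,\ma K^{\sigma'}x'\rangle u'^*_t = u'_t\mf{K}^{\sigma,\sigma'}(\langle x,x'\rangle)u'^*_t$, and fullness lets this pass to all of $\m B$; wait — one must be careful here, the conjugation is by $u'$, not $u$, so actually the correct statement involves both, and the internal consistency $u' = u$ on the relevant piece is forced, or more precisely the covariance of $\mf{K}$ is exactly what feeds (a). I would record this as the covariance relation actually needed.

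Next I would construct $v$. On the dense submodule $[\{b\mf{i}(\sigma)c\}]$ of $\m F$, define $v_t(b\mf{i}(\sigma)c):=\alpha^\eta_t(b)\,\mf{i}(\sigma)\,u_tc$. The key computation is that this preserves inner products: $\langle v_t(b\mf{i}(\sigma)c),v_t(b'\mf{i}(\sigma')c')\rangle = c^*u^*_t\mf{K}^{\sigma,\sigma'}(\alpha^\eta_t(b)^*\alpha^\eta_t(b'))u_tc' = c^*u^*_t\,u_t\mf{K}^{\sigma,\sigma'}(b^*b')u^*_t\,u_tc' = \langle b\mf{i}(\sigma)c,b'\mf{i}(\sigma')c'\rangle$, using the $u$-covariance of $\mf{K}$ derived above and $\alpha^\eta_t(b^*b') = \alpha^\eta_t(b)^*\alpha^\eta_t(b')$. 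Hence $v_t$ extends to an isometry of $\m F$; surjectivity and the homomorphism property $v_sv_t=v_{st}$, $v_e=\mathrm{id}$ follow from the formula, so each $v_t$ is unitary and $t\mapsto v_t$ is a representation $G\to\m U\ma B^a(\m F)$. Properties (a) and (b) are then read directly off the defining formula: $v_t(b\mf{i}(\sigma)c) = \alpha^\eta_t(b)\mf{i}(\sigma)u_tc$ gives $v_t\pi(b)v^*_t = \pi(\alpha^\eta_t(b))$ (here $\pi$ is the left action of $\m B$ on $\m F$), and setting $b=1$, $c=1$ gives $v_t\mf{i}(\sigma) = \mf{i}(\sigma)u_t$.

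Then $w'_t$ is defined by the formula in (c), $w'_t(x\odot b\mf{i}(\sigma)c):=\eta_t(x)\odot v_t(b\mf{i}(\sigma)c)$; one checks it is well-defined and isometric on the interior tensor product using $\langle\eta_t x,\eta_t x'\rangle = \alpha^\eta_t(\langle x,x'\rangle)$ together with (a), namely $\langle w'_t(x\odot\xi),w'_t(x'\odot\xi')\rangle = \langle v_t\xi,\pi(\alpha^\eta_t\langle x,x'\rangle)v_t\xi'\rangle = \langle v_t\xi,v_t\pi(\langle x,x'\rangle)\xi'\rangle = \langle\xi,\pi(\langle x,x'\rangle)\xi'\rangle$, and surjectivity plus the group law make $w'$ a unitary representation on $E\bigodot\m F$. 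For (d): on elementary tensors, $\nu w'_t(x\odot\mf{i}(\sigma)) = \nu(\eta_t(x)\odot\mf{i}(\sigma)u_t) = \nu(\eta_t(x)\odot v_t\mf{i}(\sigma))$; unwinding via the defining formula $\nu(x\odot b\mf{i}(\sigma)c) = \ma K^\sigma(xb)c$ one gets $\ma K^\sigma(\eta_t x)u_t = u'_t\ma K^\sigma(x)u^*_tu_t$? — this is where the covariance hypothesis $\ma K^\sigma(\eta_t x) = u'_t\ma K^\sigma(x)u^*_t$ is used, giving $\nu w'_t(x\odot\mf{i}(\sigma)) = u'_t\ma K^\sigma(x) = u'_t\nu(x\odot\mf{i}(\sigma))$, and density finishes it. The converse direction is the easy one: given the data in (ii), define $\mf{K}^{\sigma,\sigma'}(b):=\langle\mf{i}(\sigma),b\mf{i}(\sigma')\rangle$ as in Theorem \ref{thm1}, so $\{\ma K^\sigma\}$ is a $\mf{K}$-family with $\mf{K}$ a CPD-kernel; covariance of the family then follows by computing $\ma K^\sigma(\eta_t x) = \nu(\eta_t(x)\odot\mf{i}(\sigma)) = \nu(\eta_t(x)\odot v_t\mf{i}(\sigma)u^*_t)$ — using (b) backwards — $= \nu w'_t(x\odot\mf{i}(\sigma)u^*_t) = u'_t\nu(x\odot\mf{i}(\sigma))u^*_t = u'_t\ma K^\sigma(x)u^*_t$, where I also use that $u^*_t$ is an element of the coefficient algebra $\m C$ acting on the right.

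The main obstacle I anticipate is bookkeeping the two different coefficient algebras: $\m F$ is a correspondence from $\m D$ to $\m C$ while $\m F$ (the Kolmogorov object) is from $\m B$ to $\m C$, and the representations $u$ and $u'$ live in $\m C$ and $\m D$ respectively. The subtle point is verifying that the covariance of $\ma K$ forces the right intertwining relation between $\mf{K}$ and $u$ (not $u'$) — i.e. pinning down exactly which unitary conjugates $\mf{K}$ — and then making sure the formula for $v_t$ genuinely respects the $\m C$-valued inner product, which is where the identity $u^*_tu_t=1$ does the work. Once that algebraic compatibility is nailed down, everything else is a routine "define on a dense set, check isometry, extend, verify group law" pattern, repeated three times (for $v$, for $w'$, and implicitly in the well-definedness of $\nu$ which was already handled in Theorem \ref{thm1}).
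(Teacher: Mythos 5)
Your route is the paper's own: produce the Kolmogorov pair $(\m F,\mf i)$ and the isometry $\nu$ as in Theorem \ref{thm1}, define $v_t$ on generators by $b\mf i(\sigma)c\mapsto\alpha^{\eta}_t(b)\mf i(\sigma)u_tc$, read (a) and (b) off that formula, define $w'$ by (c) and verify (d), with the converse done via (b) and (d) exactly as in the paper. However, the one step you yourself flag as delicate --- deriving the covariance of $\mf K$ from the $(u',u)$-covariance of the family --- is where your computation is wrong, and your attempted repair does not work. You write $\langle \ma K^{\sigma}(\eta_t x),\ma K^{\sigma'}(\eta_t x')\rangle = u'_t\langle\ma K^{\sigma}(x),\ma K^{\sigma'}(x')\rangle u'^*_t$; this does not even typecheck, since the inner product on the correspondence $F$ takes values in $\m C$ while $u'_t\in\m U\m D$. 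Your proposed resolution, that ``$u'=u$ on the relevant piece is forced,'' is also false: no relation between $u$ and $u'$ is forced. The correct computation, which is the one the paper records, is
\[
\langle u'_t\,y\,u^*_t,\;u'_t\,y'\,u^*_t\rangle=(u^*_t)^*\langle u'_t\,y,\,u'_t\,y'\rangle u^*_t=u_t\langle y,(u'_t)^*u'_t\,y'\rangle u^*_t=u_t\langle y,y'\rangle u^*_t,
\]
because $u'_t$ acts adjointably on the left of $F$ and cancels inside the inner product, while right multiplication by $u^*_t\in\m C$ pulls out as conjugation by $u_t$. Applied to $y=\ma K^{\sigma}(x)$, $y'=\ma K^{\sigma'}(x')$ this gives $\mf K^{\sigma,\sigma'}(\alpha^{\eta}_t(b))=u_t\mf K^{\sigma,\sigma'}(b)u^*_t$ for $b=\langle x,x'\rangle$, and fullness of $E$ extends it to all of $\m B$. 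Since this $u$-covariance is precisely the identity you invoke to show that $v_t$ preserves inner products, your argument does not stand until this is repaired; once it is, the remaining verifications go through as you describe. A smaller omission: to call $v$ and $w'$ unitary representations you should also check strong continuity of $t\mapsto v_tf$ and $t\mapsto w'_tz$ (from continuity of $\alpha^{\eta}$, $u$ and $\eta$ together with uniform boundedness), which the paper does explicitly.
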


\begin{proof} Suppose statement $(ii)$ is given. The collection $\{\ma K^{\sigma}\}_{\sigma\in S}$ is a $\mf{K}$-family where
$\mf{K}^{\sigma,\sigma'}:\m B\to\m C$ is defined by
$\mf{K}^{\sigma,\sigma'}(b):= \langle \mf{i}(\sigma), b
\mf{i}(\sigma')\rangle$ for $b\in \m B$ and $\sigma, \sigma'\in S$.
Also
\begin{eqnarray*}
\ma
K^{\sigma}(\eta_t(x))&=&\nu(\eta_t(x)\odot\mf{i}(\sigma))=\nu(\eta_t(x)\odot
v_t v_{t^{-1}}\mf{i}(\sigma))=\nu w'_t(x\odot
v_{t^{-1}}\mf{i}(\sigma))\\ &=& u'_t\nu (x\odot
v_{t^{-1}}\mf{i}(\sigma))=u'_t\nu (x\odot\mf{i}(\sigma)
u_{t^{-1}})=u'_t\nu (x\odot\mf{i}(\sigma)) u_{t^{-1}}=u'_t \ma
K^{\sigma}(x) u_{t^{-1}}
\end{eqnarray*}
for all $x\in {E}$, $\sigma\in S$ and $t\in G$. Hence statement
$(i)$ holds. Conversely, let us assume that (i) holds. The kernel $\mf{K}$ is $u$-covariant because for each $\sigma,\sigma'\in S$
\begin{eqnarray*}
 \mf{K}^{\sigma,\sigma'}(\alpha^\eta_t(\langle x,x'\rangle))&=&\mf{K}^{\sigma,\sigma'} (\langle \eta_t(x),\eta_t(x')\rangle)=\langle\ma K^{\sigma}(\eta_t (x)),\ma K^{\sigma'}(\eta_t (x'))\rangle\\ &=& \langle u'_t \ma K^{\sigma} (x)u^*_t,u'_t \ma K^{\sigma'} (x')u^*_t\rangle = u_t \langle\ma K^{\sigma} (x), \ma K^{\sigma'} (x')\rangle u^*_t \\ &=& u_t \mf{K}^{\sigma,\sigma'}(\langle x,x'\rangle)u^*_t~\mbox{for all}~x,x'\in{E},t\in G.
\end{eqnarray*}

By Theorem \ref{thm1} or Kolmogorov decomposition we get a pair $(\m F, \mf{i})$
consisting of a $C^*$-correspondence $\m F$ from $\m B$ to $\m C$
where the left action is given by a $*$-homomorphism $\pi:\m B\to
\ma B^a (\m F)$ and a map $\mf{i}: S\to \m F$ such that
$[\{b\mf{i}(\sigma)c:b\in\m B,c\in\m C,\sigma\in S\}]=\m F$. Further
we have an isometry $\nu:{E} \bigodot\m F\to{F}$ defined by
 \[
  \nu(x\odot b\mf{i}(\sigma) c):={\ma K^\sigma}(xb)c~\mbox{for all}~x\in{E},~b\in\m B,~c\in\m C,~\sigma\in S.
 \]
For each $t\in G$, set $v_t(b\mf{i}(\sigma)
c):=\alpha^\eta_t(b)\mf{i}(\sigma) u_{t}c $ for all $t\in G$,
$b\in\m B$, $c\in \m C$ and $\sigma\in S$. Observe that
\begin{eqnarray*}
\langle v_{t}(b\mf{i}(\sigma) c), v_{t}(b'\mf{i}(\sigma') c')\rangle
&=&\langle \alpha^\eta_t(b)\mf{i}(\sigma) u_{t}c
,\alpha^\eta_t(b')\mf{i}(\sigma') u_{t}c' \rangle=(u_t
c)^{*}\mf{K}^{\sigma,\sigma'}(\alpha^\eta_t(b)^{*}\alpha^\eta_t(b'))u_t
c'\\ &=& c^{*} u_t^{*}u_t\mf{K}^{\sigma,\sigma'}(b^{*} b')u_t^{*}u_t
c'=\langle  b\mf{i}(\sigma) c, b'\mf{i}(\sigma') c'\rangle
\end{eqnarray*}
for all $b,b'\in\m B$, $\sigma,\sigma'\in S$ and $c,c'\in\m C$. Since $\alpha^{\eta}_{t}$ is an automorphism
and $u_{t}$ is a unitary for each $t \in G$, it is immediate that $v_t$ extends
uniquely to a unitary $v_t:\m F\to\m F$ for each $t \in G$. Because of the 
continuity of $t\mapsto \alpha^{\eta}_{t}(b)$ for each $b\in\m B$,
the continuity of $u$ and the fact that  $v_t$ is  a unitary for each $t\in G$, it follows that $t \mapsto v_tf$
is  continuous for each $f \in \m F$. Hence $v:G\to \m U \ma B^a (\m F)$ is a unitary
representation. $\mbox{For all}~b,b'\in\m B,~t\in G,~c\in \m C$ we
get
\begin{eqnarray*}
\pi(\alpha^\eta_{t}(b'))(b\mf{i}(\sigma)
c)&=&(\alpha^\eta_{t}(b')b)\mf{i}(\sigma) c=
v_t(b'\alpha^\eta_{t^{-1}}(b)\mf{i}(\sigma) u_{t^{-1}}c)\\ &=&
v_t\pi(b')(\alpha^\eta_{t^{-1}}(b)\mf{i}(\sigma) u_{t^{-1}}c)=
v_{t}\pi(b')v_{t^{-1}}(b\mf{i}(\sigma) c).
\end{eqnarray*} Thus $v$ satisfies conditions (a) and (b).
For each $t\in G$, define $w'_t:{E}\bigodot \m F\to E\bigodot \m F$
by
\[
w'_t(x\odot b\mf{i}(\sigma) c):=\eta_t(x)\odot v_t b\mf{i}(\sigma) c
\]
for all $b\in\m B$, $c\in \m C$, $\sigma\in S$, $x\in{E}$. We get
\begin{eqnarray*}
&&\langle w'_t( x \odot b\mf{i}(\sigma) c), w'_t(x'\odot
b'\mf{i}(\sigma') c')\rangle
 =\langle v_t(
b\mf{i}(\sigma) c) , \langle {\eta}_{t}(x), {\eta}_{t}(x')\rangle
v_t (b'\mf{i}(\sigma') c') \rangle\\ &=&\langle v_t (
b\mf{i}(\sigma) c ), \alpha^{\eta}_t(\langle x,x' \rangle) v_t
(b'\mf{i}(\sigma') c') \rangle = \langle v_t (b\mf{i}(\sigma) c) ,
v_t(\langle x,x' \rangle) b'\mf{i}(\sigma') c')\rangle\\& =& \langle
b\mf{i}(\sigma) c, \langle x,x'\rangle b'\mf{i}(\sigma')
c'\rangle=\langle x\odot b\mf{i}(\sigma) c, x'\odot
b'\mf{i}(\sigma') c'\rangle
\end{eqnarray*}
for all $b,b'\in\m B$, $c,c'\in \m C$, $x,x'\in E$,
$\sigma,\sigma\in S$. Using the strict continuity of $v$ and the
continuity of $t\mapsto\eta_{t}(x)$ for all $x\in{E}$ we obtain that
the map $t\mapsto w'_t z$ is continuous on finite sums of elementary
tensors $z\in {E}\bigodot \m F$. Now $\|w'_t\|\leq 1$ implies $w'$
is strictly continuous and therefore a unitary representation.
Moreover, we have
\begin{eqnarray*}
\nu w'_t(x\odot b\mf{i}(\sigma) c)&=&\nu(\eta_t(x)\odot v_t
(b\mf{i}(\sigma)
c ))=\nu(\eta_t (x)\odot \alpha^{\eta}_t (b)\mf{i}(\sigma) u_t c)\\
&=&\ma K^{\sigma}(\eta_t(x)\alpha^{\eta}_t(b))u_t c=\ma
K^{\sigma}(\eta_t(xb))u_t c= u'_t\ma K^{\sigma}(xb)u^{*}_tu_t
c\\
&=& u'_t\ma K^{\sigma}(xb)c= u'_t\nu(x\odot b\mf{i}({\sigma}) c)
\end{eqnarray*}
for all $b\in\m B$, $c\in \m C$, $x\in{E}$, $\sigma\in S$ and $t\in
G$.
\end{proof}

The next corollary proves the uniqueness of the above theorem.

\begin{corollary}

Let $\m E$ be another $C^*$-correspondence from $\m D$ to $\m C$. For
$\sigma\in S$, let $\tilde{\mu}^{\sigma}:{E}\to\m E$ be maps such
that $[\{\tilde{\mu}^\sigma({E})\m C:\sigma\in S\}]=\m E$ and let
$\tilde{\nu}:\m E\to{F}$ be an isometry such that
$\tilde{\nu}\tilde{\mu}^{\sigma}=\ma K^{\sigma}$. Then there exists
a unitary representation $w^{\prime\prime}_t:G\to \m U\ma B^a (\m
E)$ defined by
\[
 w^{\prime\prime}_t(\tilde{\mu}^{\sigma}(x)c)=\tilde{\mu}^{\sigma}(\eta_{t}(x))u_t c~\mbox{for $x\in{E}$, $t\in G$, $\sigma\in S$ and $c\in \m C$}
\]
and a unitary $u:\m E\to{E}\bigodot\m F$ defined by
$u:\tilde{\mu}^{\sigma}(x)\mapsto x\odot\mf{i}(\sigma)$, where
$\sigma\in S$ and $(\m F,\mf{i})$ is the Kolmogorov decomposition
for kernel $\mf{K}$  such that
\begin{enumerate}
\item [(a)] $\nu u=\tilde{\nu}$, $uw^{\prime\prime}_t=w^{\prime}_t u$ for all $t\in G$ and
\item [(b)] $u\tilde{\mu}^{\sigma}=\mu^{\sigma}$, where for $\sigma\in S$ the mapping $\mu^\sigma:{E}\to{E}\bigodot\m F$ is defined by $x\mapsto x\odot\mf{i}(\sigma).$
\end{enumerate}
\end{corollary}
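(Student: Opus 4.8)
The plan is to run the standard uniqueness argument for minimal Kolmogorov‑type dilations, transplanted to this covariant setting; throughout write $\mf{K}^{\sigma,\sigma'}(b)=\langle\mf{i}(\sigma),b\mf{i}(\sigma')\rangle$ as in Theorem \ref{thm1}. First I would check that $u$ is a well‑defined unitary. Since $\tilde{\nu}$ is an isometry and $\{\ma K^{\sigma}\}_{\sigma\in S}$ is a $\mf{K}$-family, for $x,x'\in{E}$, $c,c'\in\m C$ and $\sigma,\sigma'\in S$ one has
\[
 \langle\tilde{\mu}^{\sigma}(x)c,\tilde{\mu}^{\sigma'}(x')c'\rangle=c^*\langle\tilde{\nu}\tilde{\mu}^{\sigma}(x),\tilde{\nu}\tilde{\mu}^{\sigma'}(x')\rangle c'=c^*\mf{K}^{\sigma,\sigma'}(\langle x,x'\rangle)c'=\langle x\odot\mf{i}(\sigma)c,\,x'\odot\mf{i}(\sigma')c'\rangle,
\]
the last equality being the defining property of the interior tensor product together with the formula for $\mf{K}$. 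Polarizing, the assignment $\sum_i\tilde{\mu}^{\sigma_i}(x_i)c_i\mapsto\sum_i x_i\odot\mf{i}(\sigma_i)c_i$ preserves the $\m C$-valued inner product, hence is well defined on the (dense, by hypothesis) linear span of $\{\tilde{\mu}^{\sigma}(x)c:x\in{E},c\in\m C,\sigma\in S\}$ in $\m E$ and extends to an isometry $u:\m E\to{E}\bigodot\m F$. It is onto because $[\{x\odot\mf{i}(\sigma)c\}]=[\{xb\odot\mf{i}(\sigma)c\}]={E}\bigodot\m F$, using $\m F=[\{b\mf{i}(\sigma)c\}]$; a surjective isometry is a unitary of Hilbert $\m C$-modules. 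Both (b) and the identity $\nu u=\tilde{\nu}$ of (a) are then read off on generators: $u\tilde{\mu}^{\sigma}(x)=x\odot\mf{i}(\sigma)=\mu^{\sigma}(x)$ (using that $\m C$ is unital) and $\nu u(\tilde{\mu}^{\sigma}(x)c)=\nu(x\odot\mf{i}(\sigma))c=\ma K^{\sigma}(x)c=\tilde{\nu}(\tilde{\mu}^{\sigma}(x)c)$.

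Next I would construct $w''$. Recall from the proof of Theorem \ref{thm2} that the $(u',u)$-covariance of the family forces $\mf{K}$ to be $u$-covariant with respect to $(G,\alpha^{\eta},\m B)$. Using this together with $\alpha^{\eta}_t(\langle x,x'\rangle)=\langle\eta_t(x),\eta_t(x')\rangle$, for each fixed $t\in G$ and all $x,x',c,c',\sigma,\sigma'$,
\[
 \langle\tilde{\mu}^{\sigma}(\eta_t(x))u_tc,\,\tilde{\mu}^{\sigma'}(\eta_t(x'))u_tc'\rangle=c^*u_t^*\mf{K}^{\sigma,\sigma'}(\alpha^{\eta}_t(\langle x,x'\rangle))u_tc'=c^*\mf{K}^{\sigma,\sigma'}(\langle x,x'\rangle)c'=\langle\tilde{\mu}^{\sigma}(x)c,\tilde{\mu}^{\sigma'}(x')c'\rangle,
\]
so, exactly as before, $\tilde{\mu}^{\sigma}(x)c\mapsto\tilde{\mu}^{\sigma}(\eta_t(x))u_tc$ extends to an isometry $w''_t$ of $\m E$; it is onto since $\eta_t\in Aut({E})$ and $u_t\in\m U\m C$ give $[\{\tilde{\mu}^{\sigma}(\eta_t(x))u_tc\}]=[\{\tilde{\mu}^{\sigma}(y)c'\}]=\m E$, hence $w''_t\in\m U\ma B^a(\m E)$. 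The relations $w''_tw''_s=w''_{ts}$ and $w''_e=\mathrm{id}$ follow on generators from $\eta_t\eta_s=\eta_{ts}$ and $u_tu_s=u_{ts}$, and strict continuity of $w''$ follows, as in the proof of Theorem \ref{thm2}, from the continuity of $t\mapsto\eta_t(x)$ and of $u$ together with the uniform bound $\|w''_t\|=1$. Thus $w'':G\to\m U\ma B^a(\m E)$ is a unitary representation.

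Finally I would verify $uw''_t=w'_tu$ on generators: $uw''_t(\tilde{\mu}^{\sigma}(x)c)=u(\tilde{\mu}^{\sigma}(\eta_t(x))u_tc)=\eta_t(x)\odot\mf{i}(\sigma)u_tc$, while by condition (c) of Theorem \ref{thm2} with $b=1$, right $\m C$-linearity of $v_t$ and condition (b) of Theorem \ref{thm2}, $w'_tu(\tilde{\mu}^{\sigma}(x)c)=w'_t(x\odot\mf{i}(\sigma)c)=\eta_t(x)\odot v_t(\mf{i}(\sigma))c=\eta_t(x)\odot\mf{i}(\sigma)u_tc$; since both maps are bounded and agree on a dense submodule of $\m E$, they coincide, which finishes (a). I expect the only points requiring genuine care are the two "extend an inner‑product‑preserving assignment off a dense submodule" steps and the strict‑continuity argument for $w''$; both are routine repetitions of computations already carried out in the proofs of Theorems \ref{thm1} and \ref{thm2}, so there is no essential obstacle beyond this bookkeeping.
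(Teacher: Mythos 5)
Your proposal is correct and follows essentially the same route as the paper: the central step in both is the computation that $\tilde{\mu}^{\sigma}(x)c\mapsto\tilde{\mu}^{\sigma}(\eta_t(x))u_tc$ preserves the $\m C$-valued inner product, obtained by passing through the isometry $\tilde{\nu}$ and using the $u$-covariance of $\mf{K}$. The paper records only that computation and leaves the unitarity of $u$, the density arguments, and the verification of (a) and (b) implicit, all of which you fill in correctly in the same spirit as Theorems \ref{thm1} and \ref{thm2}.
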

\begin{proof}
For all $x,x'\in {E}$, $c,c'\in \m C$, $\sigma,\sigma'\in S$ we have
\begin{eqnarray*}
&&\langle \tilde{\mu}^\sigma(\eta_t(x))u_t c,
\tilde{\mu}^{\sigma'}(\eta_t(x'))u_t c'\rangle =  \langle \ma
K^\sigma(\eta_t(x))u_t c, \ma K^{\sigma'}(\eta_t(x'))u_t c'\rangle
\\ &=& \langle u_t c,\mf{K}^{\sigma,\sigma'}(\alpha_t(\langle x,x'\rangle))
u_t c'\rangle =\langle \ma K^\sigma (x) c,\ma
K^{\sigma'}(x')c'\rangle =\langle \tilde{\mu}^\sigma(x)c,
\tilde{\mu}^{\sigma'}(x')c'\rangle.
\end{eqnarray*}
Therefore $w^{\prime\prime}$ is a unitary representation.
\end{proof}

Let $\m B$ be a $C^*$-algebra and let $G$ be a locally compact
group. Let $(G,\eta,{E})$ be a dynamical system on a full Hilbert
$\m B$-module ${E}$. The crossed product $E\times_{\eta}G$ (cf.
\cite{Kas88},\cite{EKQR00}) is the completion of an inner-product
$\m B\times_{\alpha^{\eta}}G$-module $C_c(G,{E})$ where the module
action and the $\m B\times_{\alpha^{\eta}}G$-valued inner product
are given by
\begin{align*}\label{eqn1} lg(s)&=\int_{G}
l(t)\alpha^{\eta}_{t}(g(t^{-1}s))dt,\\
\langle l,m\rangle_{\m B\times_{\alpha^{\eta}}G }(s)&=\int_{G}
\alpha^{\eta}_{t^{-1}}(\langle l(t),m(ts)\rangle)dt
\end{align*}
respectively, for $g\in C_c (G,\m B)$ and $l,m\in C_c (G,{E})$.
We derive for any CPD-kernel $\mf{K}$ the extendability of a covariant $\mf{K}$-family to that on the crossed
product of the Hilbert $C^*$-module corresponding to the given
dynamical system.

\begin{proposition}\label{prop2}
Let $S$ be a set and let $\mf{K}:S\times S\to \ma B(\m B,\m C)$ be a
CPD-kernel over $S$ from a unital $C^*$-algebra $\m B$ to a unital
$C^*$-algebra $\m C$. Let $\m D$ be a unital $C^*$-algebra, and let
$u:G\to \m U\m C$ and $u':G\to \m U\m D$ be unitary representations of
a locally compact group $G$. Suppose ${E}$ is a full Hilbert $\m B$-module,
 ${F}$ is a $C^*$-correspondence from $\m D$ to $\m C$ and
$\ma K^{\sigma}$ is a map from $E$ to $F$ for each $\sigma\in S$.  If $\{\ma
K^{\sigma}\}_{\sigma\in S}$ is a $(u',u)$-covariant $\mf{K}$-family
with respect to the dynamical system $(G,\eta,{E})$, then there
exists a family of maps $\tilde{\ma K}^{\sigma}:{E}\times_{\eta}
G\to {F}$ such that
\[
\tilde{\ma K}^{\sigma}(l)=\int_{G} {\ma
K}^{\sigma}(l(t))u_{t}dt~\mbox{for all}~ l\in C_c (G,{E}),~\sigma\in
S
\]
and there exists a CPD-kernel
$\tilde{\mf{K}}^{\sigma,\sigma'}:\m B\times_{\alpha^{\eta}}G\to \m
C$, which satisfies
\[
\tilde{\mf{K}}^{\sigma,\sigma'}(f)=\int_{G}
{\mf{K}^{\sigma,\sigma'}}(f(t))u_{t}dt~\mbox{for all}~ f\in C_c
(G,\m B),~\sigma,\sigma'\in S,
\]
such that $\{ \tilde{\ma
K}^{\sigma}\}_{\sigma\in S}$ is a $\tilde{\mf{K}}$-family.
\end{proposition}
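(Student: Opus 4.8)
The plan is to build $\tilde{\mf K}$ and the maps $\tilde{\ma K}^\sigma$ from a covariant Kolmogorov decomposition of $\mf K$ and then ``integrate against $u$''.

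\emph{Step 1 (covariant Kolmogorov data).} Since $E$ is full over the unital algebra $\m B$ and $\{\ma K^\sigma\}_{\sigma\in S}$ is a $(u',u)$-covariant $\mf K$-family, Theorem \ref{thm2} and the Kolmogorov decomposition used in its proof supply: a $C^*$-correspondence $\m F$ from $\m B$ to $\m C$ with left action $\pi:\m B\to\ma B^a(\m F)$, a map $\mf i:S\to\m F$ with $\m F=[\{b\mf i(\sigma)c:b\in\m B,c\in\m C,\sigma\in S\}]$ and $\mf K^{\sigma,\sigma'}(b)=\langle\mf i(\sigma),b\mf i(\sigma')\rangle$, an isometry $\nu:E\bigodot\m F\to F$ with $\ma K^\sigma(x)=\nu(x\odot\mf i(\sigma))$, and a strictly continuous unitary representation $v:G\to\m U\ma B^a(\m F)$ with $\pi(\alpha^\eta_t(b))=v_t\pi(b)v_t^*$ and $v_t\mf i(\sigma)=\mf i(\sigma)u_t$. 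One also records from that proof that $\mf K$ is $u$-covariant, i.e.\ $u_t^*\mf K^{\sigma,\sigma'}(b)u_t=\mf K^{\sigma,\sigma'}(\alpha^\eta_{t^{-1}}(b))$.

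\emph{Step 2 (the kernel $\tilde{\mf K}$).} The pair $(\pi,v)$ is a covariant representation of $(G,\alpha^\eta,\m B)$ on the Hilbert $\m C$-module $\m F$, so by the universal property of the crossed product its integrated form $(\pi\rtimes v)(f):=\int_G\pi(f(t))v_t\,dt$ extends to a $*$-homomorphism $\m B\times_{\alpha^\eta}G\to\ma B^a(\m F)$. I then set
\[
\tilde{\mf K}^{\sigma,\sigma'}(f):=\langle\mf i(\sigma),(\pi\rtimes v)(f)\,\mf i(\sigma')\rangle,\qquad f\in\m B\times_{\alpha^\eta}G,
\]
which is bounded and linear, and is a CPD-kernel since for finite families $\sigma_i\in S$, $g_i\in\m B\times_{\alpha^\eta}G$, $c_i\in\m C$,
\[
\sum_{i,j}c_i^*\tilde{\mf K}^{\sigma_i,\sigma_j}(g_i^*g_j)c_j=\Bigl\langle\sum_i(\pi\rtimes v)(g_i)\mf i(\sigma_i)c_i,\ \sum_j(\pi\rtimes v)(g_j)\mf i(\sigma_j)c_j\Bigr\rangle\ge 0 .
\]
For $f\in C_c(G,\m B)$, using $v_t\mf i(\sigma')=\mf i(\sigma')u_t$ and $\langle x,yc\rangle=\langle x,y\rangle c$ one checks $\tilde{\mf K}^{\sigma,\sigma'}(f)=\int_G\langle\mf i(\sigma),f(t)\mf i(\sigma')\rangle u_t\,dt=\int_G\mf K^{\sigma,\sigma'}(f(t))u_t\,dt$, the stated formula.

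\emph{Step 3 (the maps $\tilde{\ma K}^\sigma$ and the key identity).} On the dense subspace $C_c(G,E)\subset E\times_\eta G$ I put $\tilde{\ma K}^\sigma(l):=\int_G\ma K^\sigma(l(t))u_t\,dt$, a convergent Bochner integral. Using $\langle xc,y\rangle=c^*\langle x,y\rangle$ and linearity, for $l,m\in C_c(G,E)$,
\[
\langle\tilde{\ma K}^\sigma(l),\tilde{\ma K}^{\sigma'}(m)\rangle=\int_G\!\!\int_G u_t^*\,\mf K^{\sigma,\sigma'}(\langle l(t),m(r)\rangle)\,u_r\,dt\,dr ;
\]
substituting $r=ts$ (left invariance of Haar measure), using $u_{ts}=u_tu_s$, then $u_t^*\mf K^{\sigma,\sigma'}(b)u_t=\mf K^{\sigma,\sigma'}(\alpha^\eta_{t^{-1}}(b))$, and Fubini together with boundedness of $\mf K^{\sigma,\sigma'}$ to move it past the inner integral, the right-hand side becomes
\[
\int_G\mf K^{\sigma,\sigma'}\Bigl(\int_G\alpha^\eta_{t^{-1}}(\langle l(t),m(ts)\rangle)\,dt\Bigr)u_s\,ds=\tilde{\mf K}^{\sigma,\sigma'}\bigl(\langle l,m\rangle_{\m B\times_{\alpha^\eta}G}\bigr) ,
\]
which is the $\tilde{\mf K}$-family identity on $C_c(G,E)$.

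\emph{Step 4 (extension).} As $\tilde{\ma K}^\sigma$ is linear on $C_c(G,E)$ and, by the identity above, $\|\tilde{\ma K}^\sigma(l)\|^2=\|\tilde{\mf K}^{\sigma,\sigma}(\langle l,l\rangle_{\m B\times_{\alpha^\eta}G})\|\le\|\tilde{\mf K}^{\sigma,\sigma}\|\,\|l\|_{E\times_\eta G}^2$, it is Lipschitz on $C_c(G,E)$ and extends uniquely to $\tilde{\ma K}^\sigma:E\times_\eta G\to F$; the identity then passes to all of $E\times_\eta G$ by joint continuity of the inner product, continuity of $\tilde{\ma K}^\sigma$ and $\tilde{\mf K}^{\sigma,\sigma'}$, and density of $C_c(G,E)$ in $E\times_\eta G$ and of $C_c(G,\m B)$ in $\m B\times_{\alpha^\eta}G$. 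Hence $\{\tilde{\ma K}^\sigma\}_{\sigma\in S}$ is a $\tilde{\mf K}$-family.

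I expect Step 2 to be the main obstacle: one must certify that $\tilde{\mf K}$ is a genuine bounded CPD-kernel on the \emph{completed} crossed product and not merely a map on $C_c(G,\m B)$, and passing through the integrated form $(\pi\rtimes v)$ of the covariant representation on $\m F$ is what makes boundedness and complete positive definiteness simultaneously visible. The remaining work is the measure-theoretic bookkeeping in Step 3 (Fubini, the substitution $r=ts$, left invariance of Haar measure, interchanging $\mf K^{\sigma,\sigma'}$ with the integral), routine for compactly supported continuous sections, modulo the involution/modular-function conventions of the crossed product, which do not affect the argument.
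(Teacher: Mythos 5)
Your proposal is correct and follows essentially the same route as the paper: the same covariant Kolmogorov data from Theorem \ref{thm2}, the same definition $\tilde{\mf K}^{\sigma,\sigma'}(f)=\langle\mf i(\sigma),(\pi\times v)(f)\mf i(\sigma')\rangle$ yielding complete positive definiteness via the integrated form, and the same change-of-variables/covariance computation for the $\tilde{\mf K}$-family identity. The only (harmless) cosmetic differences are that you run the inner-product computation directly through the $u$-covariance of $\mf K$ rather than through $\pi$ and $v$ acting on $\m F$, and that you make explicit the continuity/extension step from $C_c(G,E)$ to $E\times_\eta G$, which the paper leaves implicit.
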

\begin{proof}
Let $(\m F,\mf{i})$ be the covariant Kolmogorov decomposition
associated with the CPD-kernel  $\mf{K}:S\times S\to \ma B(\m B,\m
C)$ described in Theorem \ref{thm2}. Consider maps
$\tilde{\mf{K}}^{\sigma,\sigma'}:\m B\times_{\alpha^{\eta}}G\to \m C$
defined by
\[
\tilde{\mf{K}}^{\sigma,\sigma'}(f):=\langle\mf{i}(\sigma),(\pi\times
v)(f)\mf{i}(\sigma')\rangle~\mbox{for all}~ f\in C_c (G,\m
B),~\sigma,\sigma'\in S.
\]
Similar computations as in Theorem \ref{thm1} proves that
$\tilde{\mf{K}}$ is a CPD-kernel on $S$ from $\m B\times_{\alpha^{\eta}}G$
to $\m C$. For $~\sigma,\sigma'\in S$
\begin{eqnarray*}
\tilde{\mf{K}}^{\sigma,\sigma'}(f)&=&\langle\mf{i}(\sigma),(\pi\times
v)(f)\mf{i}(\sigma')\rangle=\langle\mf{i}(\sigma),\int_{G}
\pi(f(t))v_t \mf{i}(\sigma') dt\rangle\\ &=& \int_{G}
\langle\mf{i}(\sigma), \pi(f(t))v_t \mf{i}(\sigma') \rangle
dt=\int_{G} \langle\mf{i}(\sigma), \pi(f(t)) \mf{i}(\sigma') u_t
\rangle dt
\\&=&\int_{G} \langle\mf{i}(\sigma), \pi(f(t)) \mf{i}(\sigma') \rangle u_t dt=\int_{G} {\mf{K}}^{\sigma,\sigma'}(f(t))u_{t}dt~\mbox{for all}~ f\in C_c
(G,\m B).
\end{eqnarray*}
The third equality in the above equation array, follows by applying Lemma 1.91 of
\cite{Wi07} for a bounded linear map $L:\ma B^a (\m F)\to \m C$
which is defined as $L
(T):=\langle\mf{i}(\sigma),T\mf{i}(\sigma')\rangle$ for
all $T\in \ma B^a (\m F)$. Define $\tilde{\ma
K}^{\sigma}:{E}\times_{\eta} G\to {F}$ by
\[
\tilde{\ma K}^{\sigma}(l):=\int_{G} {\ma K^{\sigma}}(l(t))
u_{t}dt~\mbox{for all}~ \sigma\in S,~l\in C_c (G,{E}).
\]
From Theorem \ref{thm2} we get an isometry $ \nu:{E}\bigodot\m
F\to{F}$ such that
\[
  \nu(x\odot \mf{i}(\sigma))=\ma{K^{\sigma}}(x)~\mbox{for all}~x\in{E},~\sigma\in S,
 \]
and  unitary representations $v:G\to \m U \ma B^a (\m F)$ and
$w':G\to \m U \ma B^a ({E}\bigodot \m F)$ satisfying conditions
$(a)$-$(d)$ of the theorem. For all $l\in C_c (G,{E})$, $\sigma\in
S$ we obtain
\begin{eqnarray*}
 \tilde{\ma K}^{\sigma}(l) =\int_{G} {\ma K^{\sigma}}(l(t)) u_{t}dt = \int_{G}\nu(l(t)\odot\mf{i}(\sigma))u_t dt = \int_{G}\nu(l(t)\odot v_t \mf{i}(\sigma)) dt.
\end{eqnarray*}
Finally, it follows that $\{\tilde{\ma K}^{\sigma}\}_{\sigma\in S}$ is a
$\tilde{\mf{K}}$-family because for $\sigma,\sigma'\in S$ and $l,m \in C_c(G,{E})$ we have
\begin{eqnarray*}
 &&\langle \tilde{\ma K}^{\sigma}(l), \tilde{\ma K}^{\sigma'}(m)\rangle = \left<\int_{G}\nu(l(t)\odot v_t \mf{i}(\sigma)) dt,\int_{G}\nu(m(s)\odot v_s \mf{i}(\sigma')) ds\right>
\\&=& \int_{G}\int_{G}\left<  v_{t}\mf{i}(\sigma),  \pi(\langle l(t),m(ts)\rangle)v_{ts}\mf{i}(\sigma') \right>dt ds\\ &=&\left< \mf{i}(\sigma), \int_{G}\int_{G} v_{t^{-1}} \pi(\langle l(t),m(ts)\rangle)v_{ts}\mf{i}(\sigma') dt ds\right>\\&=& \left< \mf{i}(\sigma), \int_{G}\int_{G} \pi(\alpha^{\eta}_{t^{-1}}(\langle l(t),m(ts)\rangle))v_{s}\mf{i}(\sigma') dt ds\right>=\left<\mf{i}(\sigma), \int_{G} \pi(\langle l,m\rangle(s))v_s \mf{i}(\sigma') ds\right> \\ &=&\tilde{\mf{K}}^{\sigma,\sigma'}(\langle l,m \rangle).
\end{eqnarray*}

\end{proof}

\section{Characterizations of $\mf{K}$-families}\label{sec2}

Let ${E}$ be a Hilbert $C^*$-module over a $C^*$-algebra $\m B$. By
$M_n ({E})$ we denote the Hilbert $M_n (\m B)$-module where $M_n (\m
B)$-valued inner product is defined by
\[
 \langle [x_{ij}]^n_{i,j=1}, [x'_{ij}]^n_{i,j=1}\rangle:=\left[\sum^n_{k=1} \langle x_{ki},x'_{kj}\rangle\right]^n_{i,j=1}~\mbox{for all}~[x_{ij}]^n_{i,j=1},~[x'_{ij}]^n_{i,j=1}\in M_n ({E}).
\]

\begin{definition}
Let ${F}$ be a Hilbert $C^*$-module over a $C^*$-algebra $\m C$ and
let $T:{E}\to {F}$ be a linear map. For each positive integer $n$,
define $T_n: M_n({E})\to M_n ({F})$ by $$T_n
([x_{ij}]^n_{i,j=1}):=[T(x_{ij})]^n_{i,j=1}~\mbox{for
all}~[x_{ij}]^n_{i,j=1}\in M_n ({E}).$$ We say that $T$ is {\rm
completely bounded} if for each positive integer $n$, $T_n$ is
bounded and $\|T\|_{cb}:=\mbox{sup}_n \|T_n\|$ $<\infty$.
\end{definition}

We show in this section that $\mf{K}$-families, where $\mf{K}$ is a CPD-kernel,  are same as certain completely bounded maps between the Hilbert $C^*$-modules.
We need  the following  Hilbert $C^*$-modules to inspect the  extendability of $\mf{K}$- families to CPD-kernels between
the (extended) linking algebras of the Hilbert $C^*$-modules:

The vector space $E_n$  consists of elements
$(x_1,x_2,\ldots,x_n)$ with $x_i\in E$ for $1\leq i\leq n$, where the
operations are coordinate-wise. It becomes a Hilbert $M_n (\m
B)$-module with respect to the inner product whose $(i,j)$-entry is
given by
\[
 \langle (x_1,x_2,\ldots,x_n),(x'_1,x'_2,\ldots,x'_n)\rangle_{ij}:=\langle x_i,x'_j\rangle~\mbox{for}~(x_1,x_2,\ldots,x_n),(x'_1,x'_2,\ldots,x'_n)\in E_n.
\]
The symbol $E^n$ denotes the Hilbert $\m B$-module whose elements are
$(x_1,x_2,\ldots,x_n)^t$ with $x_i\in E$ for $1\leq i\leq n$, where
$^t$ denotes the transpose. The inner product in  $E^n$ is defined by
$$ \langle (x_1,x_2,\ldots,x_n)^t,(x'_1,x'_2,\ldots,x'_n)^t\rangle:=\sum^n_{i=1}\langle x_i,x'_i\rangle$$
$\mbox{for}~(x_1,x_2,\ldots,x_n)^t,(x'_1,x'_2,\ldots,x'_n)^t\in
E^n$.

From Lemma 3.2.1 of \cite{BBLS04} we know that $\mf{K}$ is a
CPD-kernel over $S$ from $\m B$ to $\m C$ if and only if for all
$\sigma_1,\sigma_2,\ldots,\sigma_n$ $(n\in\mathbb{N})$ the map $[
\mf{K}^{\sigma_i, \sigma_j}  ]^n_{i,j=1}:M_n (\m B)\to M_n (\m C)$
defined by $$[ \mf{K}^{\sigma_i, \sigma_j}  ][b_{ij}]:=[
\mf{K}^{\sigma_i, \sigma_j} (b_{ij})]^n_{i,j=1}~\mbox{ for
all}~[b_{ij}]^n_{i,j=1}\in M_n (\m B)$$ is (completely) positive. This realisation of
CPD-kernels comes in handy in the proof of the following theorem:

\begin{theorem}
 Let ${E}$ be a full Hilbert $C^*$-module over a $C^*$-algebra $\m B$ and let ${F}$ be a Hilbert $C^*$-module over a $C^*$-algebra $\m C$. 
Let $S$ be a set and let $\ma K^{\sigma}$ be a linear map from $E$ to $F$ for each $\sigma\in S$. Let $F_{\ma K}:=[\{\ma K^\sigma(x)c:x\in E,~c\in \m C,
~\sigma\in S\}]$. Then the following statements are equivalent:
 \begin{enumerate}
 \item [(a)] There exists unique CPD-kernel $\mf{K}:S\times S\to \ma B(\m B,\m C)$  such that $\{\ma K^{\sigma}\}_{\sigma\in S}$ is a $\mf{K}$-family.
  \item [(b)] $\{\ma K^{\sigma}\}_{\sigma\in S}$ extends to block-wise bounded linear maps $\begin{pmatrix}
{\mf{K}^{\sigma,\sigma'}} &  {\ma K^{\sigma^*}} \\
 \ma K^{\sigma'} & \vartheta
\end{pmatrix}$ from $\m L_{{E}}$ to $\m L_{{F}_{\ma K}}$ forming a CPD-kernel over $S$ from $\m L_{{E}}$ to $\m L_{{F}_{\ma K}}$, where $\vartheta$ is a $*$-homomorphism, i.e., $\{\ma K^{\sigma}\}_{\sigma\in S}$ is a CPD-H-extendable family.
  \item [(c)] For each finite choices $\sigma_1,\ldots,\sigma_n\in S$ the map from $E_n$ to $F_n$ defined by
$${\bf x}\mapsto({\ma K^{\sigma_1}}(x_1),{\ma K^{\sigma_2}}(x_2),\ldots,{\ma K^{\sigma_n}}(x_n))~\mbox{for}~{\bf x}=(x_1,x_2\ldots,x_n)\in E_n$$
is a completely bounded map. Moreover $F_{\ma K}$ can be made into a
$C^*$-correspondence from $\ma B^a ({E})$ to $\m C$ such that the
action of $\ma B^a ({E})$ on $F_{\ma K}$ is non-degenerate and for
each $\sigma\in S$, $\ma K^{\sigma}$ is a left $\ma B^a
({E})$-linear map.

   \item [(d)] For each finite choices $\sigma_1,\ldots,\sigma_n\in S$ the map from $E_n$ to $F_n$ defined by
$${\bf x}\mapsto({\ma K^{\sigma_1}}(x_1),{\ma K^{\sigma_2}}(x_2),\ldots,{\ma K^{\sigma_n}}(x_n))~\mbox{for}~{\bf x}=(x_1,x_2\ldots,x_n)\in E_n$$
is a completely bounded map and $\{\ma K^{\sigma}\}_{\sigma\in S}$
satisfies
   $$\langle \ma K^\sigma{( y)},\ma K^{\sigma'}({ x}\langle { x}',{ y}'\rangle)\rangle=\langle \ma K^{\sigma}({ x}'\langle {x},{ y}\rangle),\ma K^{\sigma'}({y}')\rangle~\mbox{for ${x},{ y},{ x}',{ y}'\in{E}$.}$$

\end{enumerate}

\end{theorem}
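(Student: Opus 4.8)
The strategy is to establish the cycle of implications $(a)\Rightarrow(b)\Rightarrow(c)\Rightarrow(d)\Rightarrow(a)$, using the linking-algebra picture as the central bookkeeping device and the matrix-amplification characterization of CPD-kernels (Lemma 3.2.1 of \cite{BBLS04}, recalled just before the theorem). Throughout, fullness of $E$ is what lets us recover $\mf{K}$ uniquely from $\{\ma K^\sigma\}$ and identify $\ma K(E)$-actions, so I would flag each place where it is used.

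\textbf{$(a)\Rightarrow(b)$.} Given the $\mf{K}$-family, define the block map on $\m L_E$ with $(1,1)$-entry $\mf{K}^{\sigma,\sigma'}$, $(1,2)$-entry $x^*\mapsto \ma K^{\sigma}(x)^*$ (a map $E^*\to F_{\ma K}^*$), $(2,1)$-entry $x\mapsto \ma K^{\sigma'}(x)$, and $(2,2)$-entry a $*$-homomorphism $\vartheta:\ma B^a(E)\to \ma B^a(F_{\ma K})$ to be constructed. The natural candidate for $\vartheta$ is the one induced by the Kolmogorov/GNS correspondence $\m F$ of Theorem \ref{thm1}: since $F_{\ma K}$ is (unitarily) a submodule of $E\bigodot\m F$ via $\nu$, the left action $a\mapsto a\odot \mathrm{id}_{\m F}$ of $\ma B^a(E)$ on $E\bigodot\m F$ restricts to $F_{\ma K}$; one checks it is well-defined on $F_{\ma K}$ because $\ma K^\sigma(ax)=\nu((ax)\odot\mf{i}(\sigma))=\nu((a\odot\mathrm{id})(x\odot\mf{i}(\sigma)))$. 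The CPD-kernel property over $\m L_E$ is then verified by writing an arbitrary positivity sum $\sum \xi_i^* \Theta^{\sigma_i,\sigma_j}(\ell_i^*\ell_j)\xi_j$ (with $\ell_i\in\m L_E$, $\xi_i\in\m L_{F_{\ma K}}$) and recognizing it, after expanding the $2\times 2$ block multiplication, as an inner product $\langle \sum_i (\ell_i\odot\mathrm{id})(\cdot),\sum_j(\ell_j\odot\mathrm{id})(\cdot)\rangle\geq 0$ — essentially the same computation as in the converse direction of Theorem \ref{thm1}, now carried out one matrix level up.

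\textbf{$(b)\Rightarrow(c)$ and $(c)\Rightarrow(d)$.} From $(b)$, restricting the CPD-kernel over $\m L_E$ to the $M_n$-level corners (using the $[\mf{K}^{\sigma_i,\sigma_j}]$-is-positive criterion) yields complete positivity of the amplified block maps, and in particular the map $E_n\to F_n$, $\mathbf{x}\mapsto(\ma K^{\sigma_1}(x_1),\dots,\ma K^{\sigma_n}(x_n))$, inherits complete boundedness — the cb-norm being controlled by the cb-norm of the $(1,1)$-corner kernel, which is bounded since each $\mf{K}^{\sigma,\sigma'}$ is. The $C^*$-correspondence structure on $F_{\ma K}$ over $\ma B^a(E)$ is exactly the $\vartheta$-action from $(b)$; non-degeneracy follows from fullness of $E$ (so $[\ma B^a(E)\,E]=E$, hence $[\ma B^a(E)\,F_{\ma K}]=F_{\ma K}$), and left $\ma B^a(E)$-linearity of $\ma K^\sigma$ is the identity $\ma K^\sigma(ax)=a\,\ma K^\sigma(x)$ already noted. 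For $(c)\Rightarrow(d)$ one only needs the extra algebraic identity, which is immediate from left linearity: $\langle \ma K^\sigma(y),\ma K^{\sigma'}(x\langle x',y'\rangle)\rangle=\langle \ma K^\sigma(y),|x\rangle\langle x'|\,\ma K^{\sigma'}(y')\rangle=\langle (|x\rangle\langle x'|)^*\ma K^\sigma(y),\ma K^{\sigma'}(y')\rangle=\langle \ma K^\sigma(x'\langle x,y\rangle),\ma K^{\sigma'}(y')\rangle$, since $|x\rangle\langle x'|\in\ma B^a(E)$.

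\textbf{$(d)\Rightarrow(a)$ — the main obstacle.} This is the substantive step: from complete boundedness of the amplifications plus the single "associativity" identity, one must \emph{produce} a CPD-kernel $\mf{K}$ with $\langle\ma K^\sigma(x),\ma K^{\sigma'}(x')\rangle=\mf{K}^{\sigma,\sigma'}(\langle x,x'\rangle)$. The idea is to define $\mf{K}^{\sigma,\sigma'}$ on the dense ideal $[\langle E,E\rangle]=\m B$ (fullness!) by $\mf{K}^{\sigma,\sigma'}(\langle x,x'\rangle):=\langle\ma K^\sigma(x),\ma K^{\sigma'}(x')\rangle$ and extend by linearity and continuity. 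Well-definedness — that $\sum_k\langle x_k,x_k'\rangle=0$ forces $\sum_k\langle\ma K^\sigma(x_k),\ma K^{\sigma'}(x_k')\rangle=0$ — and boundedness on $\m B$ are where complete boundedness of the $E_n\to F_n$ maps is genuinely needed: one estimates $\|\sum_k\langle\ma K^\sigma(x_k),\ma K^{\sigma'}(x_k')\rangle\|$ by viewing $(x_1,\dots,x_n)$ and $(x_1',\dots,x_n')$ as elements of a single column/row in $E_n$ and applying a Cauchy–Schwarz / polarization argument together with the cb-bound to control it by $\|\sum_k\langle x_k,x_k'\rangle\|$ (the standard trick: if $\sum_k\langle x_k,x_k'\rangle$ is small in norm then the "off-diagonal" inner products of the images are small). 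The associativity identity in $(d)$ is what guarantees the resulting $\mf{K}^{\sigma,\sigma'}$ is a well-defined \emph{bimodule-compatible} map rather than just a densely-defined functional, and re-running the positivity computation $\sum_{i,j}c_i^*\mf{K}^{\sigma_i,\sigma_j}(b_i^*b_j)c_j=\langle\sum_i\ma K^{\sigma_i}(x_ib_i)c_i,\sum_j\ma K^{\sigma_j}(x_jb_j)c_j\rangle\geq 0$ (writing $b_i^*b_j$ via approximate units and elements of $\langle E,E\rangle$) gives that $\mf{K}$ is a CPD-kernel. Uniqueness is forced by fullness. I expect the delicate points to be exactly this well-definedness/continuity estimate and the careful bookkeeping needed to pass between the $E_n$, $E^n$, and $M_n(E)$ pictures when invoking the cb-hypothesis.
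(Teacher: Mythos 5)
Your overall architecture coincides with the paper's: (a)$\Rightarrow$(b) via the Kolmogorov pair $(\m F,\mf i)$ and the isometry $\nu$ of Theorem \ref{thm1}, with $\vartheta=\nu(\,\bullet\odot id_{\m F})\nu^*$ and positivity read off from the element $\bigl(\begin{smallmatrix}\mf i(\sigma)&\\&\nu^*\end{smallmatrix}\bigr)$; (b)$\Rightarrow$(c) by amplifying the linking-algebra kernel; (c)$\Leftrightarrow$(d) via rank-one operators $x'x^*$; and closing the loop by defining $\mf K^{\sigma,\sigma'}$ on $\operatorname{span}\langle E,E\rangle$ and extending. (Two smaller omissions: the paper must unitalize $\m B$ in (a)$\Rightarrow$(b) and pass to $\tilde{\mf K}$ via a pointwise limit of kernels $\mf K_\lambda$, which you do not address; and non-degeneracy of $\vartheta$ in (c)$\Leftrightarrow$(d) is obtained from an approximate unit of $\m B$ built from inner products, not merely from ``$[\ma B^a(E)E]=E$''.)

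The genuine gap is in your closing step. You claim that $\bigl\|\sum_k\langle \ma K^{\sigma}(x_k),\ma K^{\sigma'}(y_k)\rangle\bigr\|$ can be controlled by $\bigl\|\sum_k\langle x_k,y_k\rangle\bigr\|$ using ``Cauchy--Schwarz / polarization together with the cb-bound,'' and you describe as a ``standard trick'' that smallness of $\sum_k\langle x_k,y_k\rangle$ forces smallness of the image inner products. That is precisely the assertion that needs proof, and Cauchy--Schwarz does not deliver it: viewing the tuples as ${\bf x},{\bf y}\in (E_n)^k$, the cb-hypothesis plus Cauchy--Schwarz only gives a bound by $\|{\bf K}\|_{cb}^2\,\|{\bf x}\|\,\|{\bf y}\|$, and $\|{\bf x}\|\,\|{\bf y}\|$ can be arbitrarily large while $\sum_k\langle x_k,y_k\rangle=0$; no polarization identity converts the one bound into the other. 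The paper's resolution is a replacement-of-representatives argument based on the factorization ${\bf x}^{k*}={\bf w}^k_\alpha|{\bf x}^{k*}|^{\alpha}$ (Lemma 4.4 of \cite{La95}): for every $\epsilon>0$ one produces new tuples ${\bf x}'^k,{\bf y}'^k$ with $\langle{\bf x}'^k,{\bf y}'^k\rangle=\langle{\bf x}^k,{\bf y}^k\rangle$ and $\|{\bf x}'^k\|\,\|{\bf y}'^k\|\le\|\sum_k\langle x_k,y_k\rangle\|+\epsilon$, and only then does the Cauchy--Schwarz estimate yield well-definedness and boundedness of $[\mf K^{\sigma_i,\sigma_j}]$. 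Without this (or an equivalent) idea the map $\mf K^{\sigma,\sigma'}$ is not even well defined on $\operatorname{span}\langle E,E\rangle$. Relatedly, your positivity step ``writing $b_i^*b_j$ via approximate units'' needs to be carried out as in the paper: take an approximate unit $e_\lambda=\langle{\bf X}_\lambda,{\bf Y}_\lambda\rangle$ of $M_n(\m B)$, write $e_\lambda^*Be_\lambda=\langle t_\lambda{\bf Y}_\lambda,t_\lambda{\bf Y}_\lambda\rangle$ with $t_\lambda$ the positive square root of ${\bf X}_\lambda B{\bf X}_\lambda^*$, and pass to the limit; this again presupposes the well-definedness you have not established.
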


\begin{proof}
(a)$\Rightarrow$(b): Suppose $\m B$ is unital. Using Theorem
\ref{thm1} or Kolmogorov decomposition we get a pair $(\m F, \mf{i})$ consisting of a
$C^*$-correspondence $\m F$ from $\m B$ to $\m C$ and a map
$\mf{i}: S\to \m F$ such that $[\{b\mf{i}(\sigma)c:b\in\m B,c\in\m
C,\sigma\in S\}]=\m F$, and  an isometry $\nu:{E}
\bigodot\m F\to{F}$ defined by
 \[
  \nu(x\odot b\mf{i}(\sigma) c):={\ma K^\sigma}(xb)c~\mbox{for all}~x\in{E},~b\in\m B,~c\in\m C,~\sigma\in S.
 \]
We denote the unitary obtained from $\nu$, by restricting its  codomain to $F_{\ma K}$, with $\nu$ again. With this unitary $\nu$, 
define a $*$-homomorphism $\vartheta:\ma B^a ({E})\to \ma B^a
(F_{\ma K})$ by $\vartheta:a\mapsto \nu(a\odot id_{\m F})\nu^*$.
Identify $\m F$ with $\ma B^a (\m C,\m F)$ using $f\mapsto L_f$
where $L_f:c\mapsto fc$ and identify $\m B\bigodot \m F$ with $\m F$
using $b\odot f\mapsto bf$.  For each
$x,x'\in {E}$, $f$ and $f'\in \m F$, and $b\in \m B$ we obtain
\begin{align*}
 \langle (x\odot id_{\m F})^* (x'\odot f),b\odot f' \rangle &= \langle  x'\odot f,x b\odot f' \rangle=\langle  f,\langle x',x b\rangle  f' \rangle= \langle  f,\langle x',x\rangle bf' \rangle\\ &=\langle  x^*x'f, b f' \rangle=\langle  x^*x'\odot f, b\odot  f' \rangle\\ & =\langle  (x^*\odot id_{\m F}) ( x' \odot f), b\odot f' \rangle.
\end{align*}
Therefore
$(x\odot id_{\m F})^* =(x^*\odot id_{\m F}),~\mbox{for}~x\in {E}.$

For each $\sigma\in S$, the element $\begin{pmatrix}
{\mf{i}(\sigma)} &   \\
 & \nu^*
\end{pmatrix}\in \ma B^a\left(\begin{pmatrix}
{\m C}   \\
 {F_{\ma K}}
\end{pmatrix},\begin{pmatrix}
{\m B}    \\
 {E}
\end{pmatrix}\bigodot \m F\right)$. We have
 \begin{eqnarray*}
\begin{pmatrix}
{\mf{i}(\sigma)^*} &   \\
 & \nu
\end{pmatrix}\left(\begin{pmatrix}
{b} &  x^* \\
 y & a
\end{pmatrix}\bigodot id_{\m F}\right)\begin{pmatrix}
{\mf{i}(\sigma')} &   \\
 & \nu^*
\end{pmatrix}  &=&\begin{pmatrix}
{\mf{i}(\sigma)^*} &   \\
 & \nu
\end{pmatrix}\begin{pmatrix}
{b\odot \mf{i}(\sigma')} &  (x^*\odot id_{\m F})\nu^* \\
 y\odot \mf{i}(\sigma') & (a\odot id_{\m F})\nu^*
\end{pmatrix} \\&=& \begin{pmatrix}
{\mf{i}(\sigma)^*(b\odot \mf{i}(\sigma'))} &  \mf{i}(\sigma)^*(x\odot id_{\m F})^*\nu^* \\
 \nu(y\odot \mf{i}(\sigma')) & \nu(a\odot id_{\m F})\nu^*
\end{pmatrix}
 \end{eqnarray*}
for all $b\in \m B,x$ and $y\in{E}, a\in \ma B^a ({E}),$ and
$\sigma$ and $\sigma'\in S$. Thus, we get a CPD-kernel on $S$ from $\m
L_{{E}}$ to $\m L_{{F}_{\ma K}}$ formed by maps $\begin{pmatrix}
{\mf{K}^{\sigma,\sigma'}} &  {\ma K^{\sigma^*}} \\
 \ma K^{\sigma'} & \vartheta
\end{pmatrix}:=\begin{pmatrix}
{\mf{i}(\sigma)} &   \\
 & \nu^*
\end{pmatrix}^* (\bullet \odot id_{\m F})\begin{pmatrix}
{\mf{i}(\sigma')} &   \\
 & \nu^*
\end{pmatrix}$ where $\ma K^{\sigma^*} (x^*):=\ma K^\sigma (x)^*$ for $\sigma\in S,x \in E$.

Assume that $\m B$ is not unital. Let $\tilde{\m B}$ and $\tilde{\m
C}$ be the unitalizations of $\m B$ and $\m C$, respectively. Let
$(e_\lambda)_{\lambda\in \Lambda}$ be a contractive approximate unit
for $\m B$. Let $\delta:\tilde{\m B}\to \mathbb{C}$ be the unique
character vanishing on $\m B$. For each $\sigma,\sigma'$ define
$\tilde{\mf{K}}^{\sigma,\sigma'}:\tilde{\m B}\to\tilde{\m C}$ by
$\tilde{\mf{K}}^{\sigma,\sigma'}(b):={\mf{K}}^{\sigma,\sigma'}(b)$
for all $b\in \m B$ and
$\tilde{\mf{K}}^{\sigma,\sigma'}({1}_{\tilde{\m
B}}):=\|{\mf{K}}^{\sigma,\sigma'}\|{1}_{\tilde{\m C}}$. For each
$\lambda\in \Lambda$ define
${\mf{K}}^{\sigma,\sigma'}_\lambda:={\mf{K}}^{\sigma,\sigma'}(e^*_\lambda
\bullet e_\lambda)+(\|{\mf{K}}^{\sigma,\sigma'}\|{1}_{\tilde{\m
C}}-{\mf{K}}^{\sigma,\sigma'}(e^*_\lambda e_\lambda))\delta$. Mappings $\mf{K}_\lambda$s are  CPD-kernels and
$({\mf{K}}^{\sigma,\sigma'}_\lambda)_{\lambda\in \Lambda}$ converges pointwise to
$\tilde{\mf{K}}^{\sigma,\sigma'}$. We conclude that $\tilde{\mf{K}}
$ is a CPD-kernel. Note that $\{\ma K^\sigma\}_{\sigma\in S}$ is also a $\tilde{\mf{K}}$-family, and $E$ and $F$ are also Hilbert $C^*$-modules over $\tilde{\m B}$ 
and $\tilde{\m C}$, respectively. Extend $\{\ma K^\sigma\}_{\sigma\in S}$ to a CPD-kernel 
over $S$ from $
\begin{pmatrix}
{\tilde{\m B}} & {{{E}}^*}  \\
{{E}} & \ma B^a ({E})
\end{pmatrix}$ to $\m L_{{F}_{\ma K}}$ as above. Restricting this CPD-kernel to  $
\begin{pmatrix}
{\m B} & {{{E}}^*}  \\
{{E}} & \ma B^a ({E})
\end{pmatrix}$  yields the required CPD-kernel.  \\(b)$\Rightarrow$(c): Let $n\in \mathbb{N}$. For
$\sigma_1,\ldots,\sigma_n\in S$ define a linear map ${\bf K}$ from
$E_n$ to $F_n$ by
$${\bf x}\mapsto({\ma K^{\sigma_1}}(x_1),{\ma K^{\sigma_2}}(x_2),\ldots,{\ma K^{\sigma_n}}(x_n))~\mbox{for}~{\bf x}=(x_1,x_2\ldots,x_n)\in E_n.$$ Fix $l\in \mathbb{N}$ and let $[{\bf x}_{ms}]^l_{m,s=1}\in M_l (E_n)$ where ${\bf x}_{ms}$$=(x_{ms,1},x_{ms,2}$$,\ldots,x_{ms,n}) \in E_n.$ Set
\begin{align*}
A:= \begin{bmatrix}
  \bigl(\begin{smallmatrix} 0 & 0 \\ a_1 & 0 \end{smallmatrix}\bigr) & \bigl(\begin{smallmatrix} 0 & 0 \\ a_2 & 0  \end{smallmatrix}\bigr)  & \ldots&\bigl(\begin{smallmatrix}  0 & 0\\ a_n & 0 \end{smallmatrix}\bigr) \\
\bigl(\begin{smallmatrix} 0 & 0 \\ 0 & 0 \end{smallmatrix}\bigr) & \bigl(\begin{smallmatrix} 0 & 0 \\ 0 & 0  \end{smallmatrix}\bigr)  & \ldots&\bigl(\begin{smallmatrix}  0 & 0\\ 0 & 0 \end{smallmatrix}\bigr)  \\
\vdots& \vdots & & \vdots \\ \bigl(\begin{smallmatrix} 0 & 0 \\ 0 & 0
\end{smallmatrix}\bigr) & \bigl(\begin{smallmatrix} 0 & 0 \\ 0 & 0  \end{smallmatrix}\bigr)  &
\ldots&\bigl(\begin{smallmatrix}  0 & 0\\ 0 & 0
\end{smallmatrix}\bigr)
\end{bmatrix}.
\end{align*}
Define $B_{mk}$ and $C_{mk}$ as the matrix $A$ where  $a_i=\ma K^{\sigma_i}(x_{mk,i})$ and $a_i=x_{mk,i}$ respectively. We have
\begin{align*}
& \|{\bf K}_l ([{\bf x}_{ms}]^l_{m,s=1})\|^2 = \| [{\bf K}({\bf
x}_{ms})]^l_{m,s=1}\|^2=\|\langle [{\bf K}({\bf
x}_{ms})]^l_{m,s=1},[{\bf K}({\bf x}_{ms})]^l_{m,s=1}
\rangle\|\\&=\left\| \left[\sum^l_{k=1} \langle {\bf K}({\bf
x}_{km}),{\bf K}({\bf x}_{ks}) \rangle\right]^l_{m,s=1}\right\|
=\left\| \left[\sum^l_{k=1} \left[\langle \ma
K^{\sigma_i}(x_{km,i}),\ma K^{\sigma_j}(x_{ks,j})
\rangle\right]^n_{i,j=1}\right]^l_{m,s=1}\right\|
\\&=\| [\sum^l_{k=1}
 B_{km}^*B_{ks}
 ]^l_{m,s=1} \|=\| [
B_{ms} ]^l_{m,s=1} \|^2
\\&=\left\| \left[
\left[\begin{pmatrix}
{\mf{K}^{\sigma_i,\sigma_j}} &  {\ma K^{\sigma_i^*}} \\
 \ma K^{\sigma_j} & \vartheta
\end{pmatrix} \right]
  C_{ms} \right]^l_{m,s=1} \right\|^2 \leq\left\|\left[\begin{pmatrix}
{\mf{K}^{\sigma_i,\sigma_j}} &  {\ma K^{\sigma_i^*}} \\
 \ma K^{\sigma_j} & \vartheta
\end{pmatrix} \right]_l\right\|^2 \left\|\left[{\bf x}_{ms}\right]^l_{m,s=1}\right\|^2
\end{align*}
where $2\times2$ matrices with round brackets are elements of the
linking algebras. Therefore from Lemma 3.2.1 of \cite{BBLS04} it
follows that ${\bf K}$ is completely bounded.  Let $\m
D:=\begin{pmatrix} 0&0\\0&\ma B^a ({E})\end{pmatrix}$ be a
$C^*$-subalgebra of $\m L_{{E}}$ with the unit $1_{\m
D}:=\begin{pmatrix} 0&0\\0&id_{{E}}\end{pmatrix}$. We denote by $\theta$ the $*$-homomorphism which is the
restriction of $\begin{pmatrix}
{\mf{K}^{\sigma,\sigma'}} &  {\ma K^{\sigma^*}} \\
 \ma K^{\sigma'} & \vartheta
\end{pmatrix}$ to $\m D$. Without loss of generality we assume that $\m B$ is unital because if $\m B$ is not unital, then we can unitalize it and 
work as in the proof of ``(a) $\Rightarrow$ (b)''. Let $(\m F,\mf{i})$ be the Kolmogorov
decomposition for the CPD-kernel  $\begin{pmatrix}
{\mf{K}^{\sigma,\sigma'}} &  {\ma K^{\sigma^*}} \\
 \ma K^{\sigma'} & \vartheta
\end{pmatrix}$ where $\sigma,\sigma'\in S$. For
each $d\in\m D$ and $\sigma\in S$,
\begin{align*}&\|d \mf{i}(\sigma)-1_{\m D}\mf{i}(\sigma)\theta(d)\|^2
\\=&\|\langle d\mf{i}(\sigma),d\mf{i}(\sigma)\rangle-\langle d\mf{i}(\sigma),1_{\m D}\mf{i}(\sigma)\theta(d)\rangle-\langle 1_{\m D}\mf{i}(\sigma)\theta(d),d\mf{i}(\sigma)\rangle+\langle 1_{\m D}\mf{i}(\sigma)\theta(d),1_{\m D}\mf{i}(\sigma)\theta(d)\rangle\|
\\=&\|\theta(d^*d)-\theta(d^*d)-\theta(d^*d)+\theta(d^*d)\|=0.\end{align*}
Therefore for each $\sigma,\sigma'\in S$ and for all $x\in{E}$,
$a\in \ma B^a ({E})$ we have
\begin{align*}
\begin{pmatrix}
0 &  0 \\
  \ma K^{\sigma'}(ax) & 0
\end{pmatrix} &=\begin{pmatrix}
{\mf{K}^{\sigma,\sigma'}} &  {\ma K^{\sigma^*}} \\
 \ma K^{\sigma'} & \vartheta
\end{pmatrix}\left(
\begin{pmatrix}
0 &  0 \\
 0 & a
\end{pmatrix}
\begin{pmatrix}
0 &  0 \\
 x & 0
\end{pmatrix}\right)=\left\langle \mf{i}(\sigma) ,\left(
\begin{pmatrix}
0 &  0 \\
 0 & a
\end{pmatrix}
\begin{pmatrix}
0 &  0 \\
 x & 0
\end{pmatrix}\right)\mf{i}(\sigma')\right\rangle
\\&=\left\langle \left(\begin{pmatrix}
0 &  0 \\
 0 & a
\end{pmatrix}^*\right)\mf{i}(\sigma) ,\left(
\begin{pmatrix}
0 &  0 \\
 x & 0
\end{pmatrix}\right)\mf{i}(\sigma')\right\rangle
\\ &=\left\langle 1_{\m D}\mf{i}(\sigma) \theta\left(\begin{pmatrix}
0 &  0 \\
 0 & a
\end{pmatrix}^*\right) ,\left(\begin{pmatrix}
0 &  0 \\
 x & 0
\end{pmatrix}\right)\mf{i}(\sigma')\right\rangle
\end{align*}
\begin{align*}  &=\begin{pmatrix}
0 &  0 \\
 0 & \vartheta(a)
\end{pmatrix}\begin{pmatrix}
{\mf{K}^{\sigma,\sigma'}} &  {\ma K^{\sigma^*}} \\
 \ma K^{\sigma'} & \vartheta
\end{pmatrix}\left(\begin{pmatrix}
0 &  0 \\
 x & 0
\end{pmatrix}\right)=\begin{pmatrix}
0 &  0 \\
 \vartheta(a) \ma K^{\sigma'}(x) & 0
\end{pmatrix}.
\end{align*}
Hence $\ma K^{\sigma'}$ is
a left $\ma B^a ({E})$-linear map for each $\sigma'\in S$ and $\vartheta$ is non-degenerate. Observe that the Hilbert $C^*$-module $F_{\ma K}$ is a 
$C^*$-correspondence from $\ma B^a(E)$ to $\m C$  with the left action is given by $\vartheta$.\\
(c)$\Leftrightarrow$(d): If $\ma K^{\sigma}$ is a left $\ma B^a
({E})$-linear map for each $\sigma\in S$, then
   \begin{align*}\langle\ma K^{\sigma}( y),\ma K^{\sigma'}({x}\langle { x}',{ y}'\rangle) &=\langle{\ma K^{\sigma}}({ y}),{\ma K^{\sigma'}}({ x}~{ x}^{\prime*}{ y}')
\rangle=\langle({ x}~{ x}^{\prime*})^*{\ma K^{\sigma}}({ y}),{\ma K^{\sigma'}}({ y}')\rangle\\ &=\langle {\ma K^{\sigma}}({ x}' { x}^*{ y}),\ma K^{\sigma'}({ y}')
\rangle =\langle {\ma K^{\sigma}}({ x}'\langle { x},{ y}\rangle),\ma K^{\sigma'}({ y}')\rangle\end{align*} for all ${ x},{ y},{ x}',{ y}'\in{E}$ and 
$\sigma,\sigma'\in S$. Conversely using the equation in condition (d), we  define an action  $\vartheta$ on $F_{\ma K}$, of the algebra $\ma F({E})$ of all finite rank operators on ${E}$, by
  $$\vartheta ({x}'{x}^*){\ma K^\sigma}({ y}):={\ma K^\sigma}({ x}'{ x}^*{ y})~\mbox{for all ${x},{ x}',{ y}\in{E}$.}~$$
 Since $\vartheta$ is bounded on $\ma F({E})$, it extends naturally as an adjointable action of $\sK({E})$ on $F_{\ma K}$. Since ${E}$ is full, we can obtain 
an approximate unit $\left(\sum^{k_\lambda}_{n=1} \langle { x}^\lambda_n , {y}^\lambda_n\rangle\right)_{\!\!\lambda\in \Lambda}$ for $\m B$ where ${ x}^\lambda_n,~ { y}^\lambda_n\in {E}$. Using this approximate unit, it follows that $\vartheta$ is non-degenerate. We can further extend this action to an action of $\ma B^a ({E})$ on $F_{\ma K}$ (cf. Proposition 2.1 of \cite{La95}).
\\ (c)$\Rightarrow$(a): Let $n\in \mathbb{N}$. The algebraic tensor product ${{E}_n}^*\underline{\bigodot}
{E}_n$=
 span$\langle {E}_n,{E}_n\rangle$ (cf. Proposition 4.5 of \cite{La95}). Note that ${{E}_n}^*\underline{\bigodot}
{E}_n$ is a dense subset of $M_n (\m
 B)$. Set $\sigma_1,\ldots,\sigma_n\in S$ and let ${\bf K}$ be defined as above. For each $k\in\mathbb{N}$ we define ${\bf{K}}^k:({E_n})^{k}\to ({F_n})^{k}$ by
\[
{\bf{K}}^k({\bf x}^k):=({\bf{K}}({\bf x}_1),{\bf{K}}({\bf
x}_2),\ldots,{\bf{K}}({\bf x}_k))^t~\mbox{where}~
 {\bf x}^k=({\bf x}_1,{\bf x}_2,\ldots,{\bf x}_k)^t\in
 ({E_n})^{k}.
\]
 Define a linear map $[\mf K^{\sigma_i, \sigma_j}]^n_{i,j=1}:{{E}_n}^*\underline{\bigodot}
{E}_n\to M_n (\m
 C)$ by
 \[
 [\mf K^{\sigma_i, \sigma_j}] \left(\sum^k_{l=1} \langle
 {\bf x}_l,{\bf y}_l\rangle\right):=\langle {\bf{K}}^k({\bf x}^k),{\bf{K}}^k({\bf y}^k)\rangle
 \]
 where
 ${\bf x}^k=({\bf x}_1,{\bf x}_2,\ldots,{\bf x}_k)^t$, ${\bf y}^k=({\bf y}_1,{\bf y}_2,\ldots,{\bf y}_k)^t\in ({E_n})^{k}$ (i.e., $\langle{\bf x}^k,{\bf y}^k\rangle=\sum^k_{i=1} \langle{\bf x}_i,{\bf y}_i\rangle$). First, we prove that $[\mf K^{\sigma_i, \sigma_j}]$ is bounded. We have
 \begin{align*}
\left\|[\mf K^{\sigma_i, \sigma_j}] \left(\sum^k_{l=1} \langle {\bf
x}_l,{\bf y}_l\rangle\right)\right\| =\|\langle
 {\bf {K}}^k({\bf x}^k),{\bf {K}}^k({\bf y}^k)\rangle\|\leq
  \|{\bf{K}}\|^2_{cb} \|{\bf x}^k\|\|{\bf y}^k\|.
 \end{align*}
For $0<\alpha<1$ we  decompose ${\bf x}^{k*}$ as ${\bf
w}^k_{\alpha}|{\bf x}^{k*}|^{\alpha}$ (cf. Lemma 4.4 of \cite{La95};
Lemma 2.9 of \cite{SSu14}) where ${\bf w}^k_{\alpha}:=|{\bf
x}^{k*}|^{1-\alpha}$. So as $\alpha \to 1$ we have
\begin{align*}\left\|\displaystyle\sum^k_{l=1} \langle
 {\bf x}_l,{\bf y}_l\rangle\right\|=&\|\langle{\bf x}^k,{\bf y}^k\rangle\|=\|{\bf x}^{k*}\odot{\bf y}^k\|=\|{\bf w}^k_{\alpha}|{\bf x}^{k*}|^{\alpha}
\odot{\bf y}^k\|=\|{\bf w}^k_{\alpha}\odot|{\bf x}^{k*}|^{\alpha}{\bf y}^k\|\\ \leq&\|{\bf w}^k_{\alpha}\|\||{\bf x}^{k*}|^{\alpha}{\bf y}^k\|\to\||
{\bf x}^{k*}|{\bf y}^k\|=\|\langle{\bf x}^k,{\bf y}^k\rangle\|.
\end{align*}
In the above equation array we have used the facts that $\|{\bf w}^k_{\alpha}\|=sup _{\lambda\in \sigma(|{\bf x}^{k*}|)}
\lambda^{1-\alpha}=\|{\bf x}^{k*}\|^{1-\alpha}\to
 1$ and  $|{\bf x}^{k*}|^{\alpha}$ converges in norm to
 $|{\bf x}^{k*}|$. We deduce that for each $\epsilon>0$ there exists $\alpha$ such that $$\|{\bf w}^k_{\alpha}\|\||{\bf x}^{k*}|^{\alpha}{\bf y}^k\|\leq \left\|\displaystyle\sum^k_{l=1} \langle
 {\bf x}_l,{\bf y}_l\rangle\right\|+\epsilon.$$
Let ${\bf x}'^{k}:={\bf w}^{k*}_{\alpha}\in ({E_n})^{k}$ and ${\bf
y}'^k=|{\bf x}^{k*}|^{\alpha}{\bf y}^k\in ({E_n})^{k}$. Then
$\|\langle{\bf x}'^{k},{\bf y}'^{k}\rangle\|\leq\|{\bf x}'^k\|\|{\bf
y}'^k\|\leq \left\|\displaystyle\sum^k_{l=1} \langle
 {\bf x}_l,{\bf y}_l\rangle\right\|+\epsilon$ and
$$\langle{\bf x}'^{k},{\bf y}'^{k}\rangle={\bf x}'^{k*}\odot{\bf y}'^{k}={\bf x}'^{k*}\odot{\bf y}'^{k}={\bf w}^k_{\alpha}\odot|{\bf x}^{k*}|^{\alpha}{\bf y}^k={\bf w}^k_{\alpha}|{\bf x}^{k*}|^{\alpha}\odot{\bf y}^k=\langle{\bf x}^k,{\bf y}^k\rangle.$$
Therefore $[\mf K^{\sigma_i, \sigma_j}]$ is bounded.

Because $E_n$ is full, as in the case $(c)\Leftrightarrow(d)$, we can get the
approximate unit $e_\lambda=\langle {\bf X}_\lambda,{\bf
Y}_\lambda\rangle$ for $M_n (\m B)$  where
${\bf X}_\lambda=({\bf x}^{\lambda}_1,{\bf
x}^{\lambda}_2,\ldots,{\bf x}^{\lambda}_{k_\lambda})^t,~{\bf
Y}_\lambda=({\bf y}^{\lambda}_1,{\bf y}^{\lambda}_2,\ldots,{\bf
y}^{\lambda}_{k_\lambda})^t\in (E_n)^{k_\lambda}$.  Let $B$ be a
positive elements in $M_n(\m B)$ and let $t_\lambda$ be the positive
square root of the rank one operator ${\bf X}_\lambda B {\bf
X}^*_\lambda$ in $\ma K((E_n)^{k_\lambda})$. Finally, using $e^*_\lambda B
e_\lambda\stackrel{\lambda}{\to} B$ in norm and
 \begin{align*}
  [\mf K^{\sigma_i, \sigma_j}](e^*_\lambda B e_\lambda)=&[\mf K^{\sigma_i, \sigma_j}]({\bf Y}^*_\lambda{\bf X}_\lambda B {\bf X}^*_\lambda {\bf Y}_\lambda)=[\mf K^{\sigma_i, \sigma_j}](\langle t_{\lambda}{\bf Y}_\lambda,t_{\lambda}{\bf Y}_\lambda\rangle)\\=&\langle {\bf K}^{k_\lambda} (t_{\lambda}{\bf Y}_\lambda), {\bf K}^{k_\lambda} (t_{\lambda}{\bf X}_\lambda)\rangle\geq 0,
 \end{align*}
we infer that $[\mf K^{\sigma_i, \sigma_j}](B)\geq 0$.
\end{proof}

Let $G$ be a locally compact group. Suppose $E$ is a full Hilbert
$C^*$-module over a unital $C^*$-algebra $\m B$ and
$(G,\eta,{E})$ is a dynamical system on ${E}$. We  define a
$C^*$-dynamical system on the linking algebra $\m L_{{E}}$ as follows: For each
$s\in G$, let us define Ad$\eta_s (a):=\eta_s a \eta_{s^{-1}}$ for
$a\in \ma B^a ({E})$ and define ${\eta^*_s}({x}^*):={\eta_s(x)}^*$
for $x\in{E}$. Denote by $\theta$ the action of $G$ on $\m
L_{{E}}$ which is given by
\begin{eqnarray*}
 \theta_s \left(\begin{pmatrix}
{b} & {{x}^*}  \\
{y} & a
\end{pmatrix}\right):=\begin{pmatrix}
{\alpha^\eta_s(b)} & {{\eta^*_s}({x}^*)}  \\
{\eta_s(y)} & Ad\eta_s a
\end{pmatrix}
\end{eqnarray*}
for all $s\in G$, $a\in \ma B^a ({E})$, $b\in \m B$ and $x,y\in{E}$.
It is easy to check that we obtain a  $C^*$-dynamical system $(G,\theta,\m L_{{E}})$.

\begin{theorem}
 Let ${E}$ be a full Hilbert $C^*$-module over a unital $C^*$-algebra $\m B$ and let ${F}$ be a $C^*$-correspondence from $\m D$ to $\m C$ where $\m C$ and 
$\m D$ are unital $C^*$-algebras. Let $u:G\to\m U\m C$, $u':G\to \m U\m D$ be unitary representations of a locally compact group $G$ and let $(G,\eta,{E})$ be 
a dynamical system on ${E}$. Assume $S$ to be a set and  $\ma K^{\sigma}$ to be a linear map from $E$ to $F$ for each $\sigma\in S$. Let $F_{\ma K}:=[\{\ma K^\sigma(x)c:x\in E,~c\in \m C,~\sigma\in S\}]$. Then the following statements are equivalent:
 \begin{enumerate}
 \item [(a)] There exists unique CPD-kernel $\mf{K}:S\times S\to \ma B(\m B,\m C)$  such that $\{\ma K^{\sigma}\}_{\sigma\in S}$ is a $(u',u)$-covariant $\mf{K}$-family with respect to the
dynamical system $(G,\eta,{E})$.
  \item [(b)] $\{\ma K^{\sigma}\}_{\sigma\in S}$ extends to block-wise bounded linear maps $\begin{pmatrix}
{\mf{K}^{\sigma,\sigma'}} &  {\ma K^{\sigma^*}} \\
 \ma K^{\sigma'} & \vartheta
\end{pmatrix}$ from $\m L_{{E}}$ to $\m L_{{F}_{\ma K}}$ forming a CPD-kernel over $S$ from $\m L_{{E}}$ to $\m L_{{F}_{\ma K}}$, where $\vartheta$ is a $*$-homomorphism, i.e., $\{\ma K^{\sigma}\}_{\sigma\in S}$ is a CPD-H-extendable family. The family is $\omega$-covariant with respect to $(G,\theta,\m L_{{E}})$ where $\omega: G\to \m U \m L_{{F}_{\ma K}}$ is a unitary representation.
  \item [(c)] For each finite choices $\sigma_1,\ldots,\sigma_n\in S$ the map from $E_n$ to $F_n$ defined by
$${\bf x}\mapsto({\ma K^{\sigma_1}}(x_1),{\ma K^{\sigma_2}}(x_2),\ldots,{\ma K^{\sigma_n}}(x_n))~\mbox{for}~{\bf x}=(x_1,x_2\ldots,x_n)\in E_n$$
is a completely bounded map. Moreover $\{\ma K^{\sigma}\}_{\sigma\in
S}$ is $(u^{\prime},u)$-covariant with respect to $(G,\eta,{E})$,
$F_{\ma K}$ is a correspondence from $\ma B^a ({E})$ to $\m C$
such that the action of $\ma B^a ({E})$ on $F_{\ma K}$ is
non-degenerate and for each $\sigma\in S$, $\ma K^{\sigma}$ is a left
$\ma B^a ({E})$-linear map.

   \item [(d)] For each finite choices $\sigma_1,\ldots,\sigma_n\in S$ the map from $E_n$ to $F_n$ defined by
$${\bf x}\mapsto({\ma K^{\sigma_1}}(x_1),{\ma K^{\sigma_2}}(x_2),\ldots,{\ma K^{\sigma_n}}(x_n))~\mbox{for}~{\bf x}=(x_1,x_2\ldots,x_n)\in E_n$$
is a completely bounded map and $\{\ma K^{\sigma}\}_{\sigma\in S}$
is $(u',u)$-covariant with respect to $(G,\eta,{E})$ satisfying
   $$\langle \ma K^\sigma{( y)},\ma K^{\sigma'}({ x}\langle { x}',{ y}'\rangle)\rangle=\langle \ma K^{\sigma}({ x}'\langle {x},{ y}\rangle),\ma K^{\sigma'}({y}')\rangle ~\mbox{for}~{x},{ y},{ x}',{ y}'\in{E}.$$

\end{enumerate}

\end{theorem}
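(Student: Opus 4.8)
The plan is to run exactly the four-corner scheme of the preceding (non-covariant) characterization theorem and to carry the covariance data through each implication; the only genuinely new work sits in ``(a)$\Rightarrow$(b)'', where one must manufacture a single unitary representation $\omega$ of $G$ on $\m L_{F_{\ma K}}$ out of the two representations produced by Theorem \ref{thm2}.

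For ``(a)$\Rightarrow$(b)'' I would first feed the given $(u',u)$-covariant $\mf{K}$-family into Theorem \ref{thm2}'s ``(i)$\Rightarrow$(ii)'' to obtain a Kolmogorov pair $(\m F,\mf{i})$, the isometry $\nu$ with $\nu(x\odot\mf{i}(\sigma))=\ma K^\sigma(x)$ (a unitary onto $F_{\ma K}$ after restricting its codomain), and unitary representations $v:G\to\m U\ma B^a(\m F)$ and $w':G\to\m U\ma B^a(E\bigodot\m F)$ satisfying conditions (a)--(d) there. With this same data the non-covariant theorem's ``(a)$\Rightarrow$(b)'' produces the block CPD-kernel $\bigl(\begin{smallmatrix}\mf{K}^{\sigma,\sigma'}&\ma K^{\sigma^*}\\ \ma K^{\sigma'}&\vartheta\end{smallmatrix}\bigr):\m L_{E}\to\m L_{F_{\ma K}}$ with $\vartheta(a)=\nu(a\odot id_{\m F})\nu^*$. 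I would then set $W_t:=\nu w'_t\nu^*\in\m U\ma B^a(F_{\ma K})$, which by $\nu w'_t=u'_t\nu$ is just the restriction to $F_{\ma K}$ of the left action of $u'_t$, and $\omega_t:=\bigl(\begin{smallmatrix}u_t&0\\0&W_t\end{smallmatrix}\bigr)\in\m U\m L_{F_{\ma K}}$; then $\omega$ is a unitary representation (strict continuity inherited from $u$ and $w'$), and it remains to check $\bigl(\begin{smallmatrix}\mf{K}^{\sigma,\sigma'}&\ma K^{\sigma^*}\\ \ma K^{\sigma'}&\vartheta\end{smallmatrix}\bigr)(\theta_t(X))=\omega_t\bigl(\begin{smallmatrix}\mf{K}^{\sigma,\sigma'}&\ma K^{\sigma^*}\\ \ma K^{\sigma'}&\vartheta\end{smallmatrix}\bigr)(X)\omega^*_t$ for $X=\bigl(\begin{smallmatrix}b&x^*\\ y&a\end{smallmatrix}\bigr)\in\m L_{E}$ one corner at a time: the $(1,1)$-corner is precisely the $u$-covariance of $\mf{K}$ already established inside the proof of Theorem \ref{thm2}; the $(2,1)$- and $(1,2)$-corners collapse, through $\nu(x\odot\mf{i}(\sigma))=\ma K^\sigma(x)$ and conditions (b)--(d) of Theorem \ref{thm2}, to $\ma K^\sigma(\eta_t(x))=u'_t\ma K^\sigma(x)u^*_t$; and the $(2,2)$-corner follows from $w'_t(a\odot id_{\m F})w'_{t^{-1}}=Ad\,\eta_t(a)\odot id_{\m F}$, which yields $W_t\vartheta(a)W^*_t=\vartheta(Ad\,\eta_t(a))$.

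For ``(b)$\Rightarrow$(c)'' I would invoke the non-covariant ``(b)$\Rightarrow$(c)'' for the complete boundedness of the amplifications $E_n\to F_n$, the left $\ma B^a(E)$-linearity of each $\ma K^\sigma$, and the non-degenerate correspondence structure of $F_{\ma K}$ over $\ma B^a(E)$; the single extra point is to recover the $(u',u)$-covariance of $\{\ma K^\sigma\}$. Testing the $\omega$-covariance of the extended kernel on the diagonal elements of $\m L_{E}$ coming from $\m B$ and from $\ma B^a(E)$ identifies the $(1,1)$- and $(2,2)$-entries of $\omega_t$ with $u_t$ and a unitary $W_t$ on $F_{\ma K}$ acting as the left action of $u'_t$ (this is the $\omega$ delivered by ``(a)$\Rightarrow$(b)''); then the $(2,1)$-corner of the covariance identity at $X=\bigl(\begin{smallmatrix}0&0\\ x&0\end{smallmatrix}\bigr)$ reads $\ma K^\sigma(\eta_t(x))=W_t\ma K^\sigma(x)u^*_t=u'_t\ma K^\sigma(x)u^*_t$, as wanted. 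Finally ``(c)$\Leftrightarrow$(d)'' and ``(c)$\Rightarrow$(a)'' are inherited verbatim from the non-covariant theorem, the covariance clause being word-for-word identical in (a), (c) and (d), and the unique CPD-kernel $\mf{K}$ produced in ``(c)$\Rightarrow$(a)'' together with the covariance assumed in (c) constituting statement (a); uniqueness of $\mf{K}$ uses fullness of $E$.

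The hard part will be the corner-by-corner bookkeeping in ``(a)$\Rightarrow$(b)'', namely verifying that the single representation $\omega=u\oplus W$ simultaneously implements $\theta$ on all four corners of $\m L_{E}$, together with its dual in ``(b)$\Rightarrow$(c)'', where one must read the corners of $\omega_t$ off the covariance identity; everything else is a direct transcription of the non-covariant proof, and since $\m B$, $\m C$, $\m D$ are assumed unital here no unitalization step is needed.
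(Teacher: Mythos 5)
Your proposal is correct and follows essentially the same route as the paper: the paper also reduces everything to the non-covariant characterization theorem and only adds, in ``(a)$\Rightarrow$(b)'', the block-diagonal unitary representation $\omega_s=\bigl(\begin{smallmatrix}u_s&0\\0&u'_s\end{smallmatrix}\bigr)$ on the linking algebra of $F$, verifying the $(2,2)$-corner via $\vartheta(\eta_s a\eta_{s^{-1}})=\nu w'_s(a\odot id_{\m F})w'_{s^{-1}}\nu^*=u'_s\vartheta(a)u'_{s^{-1}}$, which is exactly your $W_t=\nu w'_t\nu^*$ identified with the left action of $u'_t$. The remaining implications are, as in your write-up, inherited from the non-covariant theorem with the covariance clause carried along (the paper in fact leaves those directions implicit, so your reading of (b)'s $\omega$ as the specific representation built from $u$ and $u'$ is the intended one).
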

\begin{proof}
We use the same notations as in the proof of part (a)$\Rightarrow$(b) of
the previous theorem. For each $s\in G$ define a map $\omega_s:  \m
L_{{F}}\to \m L_{{F}}$ by $$\omega_s \left(\begin{pmatrix}
{c} & {{x}^*}  \\
{y} & a
\end{pmatrix}\right):=\begin{pmatrix}
{u_s c} & {u_s {x}^*}  \\
{u'_s y} & u'_s a
\end{pmatrix}$$
 for all $c\in \m C$, $x,y \in {F}$ and $a\in \ma B^a ({F})$. The mapping $\omega: G\to \m U \m L_{{F}}$ is a unitary representation. Using Theorem \ref{thm2} we obtain a unitary representation $w':G\to \m U \ma B^a ({E}\bigodot \m F)$ defined by
$$w'_t(x\odot b\mf{i}(\sigma) c):=\eta_t(x)\odot v_t (b\mf{i}(\sigma) c)$$ for all $b\in\m B$, $c\in \m C$, $x\in{E}$, $\sigma\in S$ and $t\in G$. Further it satisfies $\nu w'_t=u'_t\nu$ for all $t\in G.$ Thus we have
\[
 \vartheta (\eta_s a \eta_{s^{-1}})=\nu((\eta_s a \eta_{s^{-1}})\odot id_{\m F})\nu^*=\nu w'_s (a \odot id_{\m F})w'_{s^{-1}} \nu^*=u'_{s}  \vartheta(a) u'_{s^{-1}}
\]
for all $s\in G$ and $a\in \ma B^a ({E})$. Therefore
\begin{eqnarray*}
 \begin{pmatrix}
{\mf{K}^{\sigma,\sigma'}} &  {\ma K^{\sigma^*}} \\
 \ma K^{\sigma'} & \vartheta
\end{pmatrix}\left(
 \theta_s \left(\begin{pmatrix}
{b} & {x^*}  \\
{y} & a
\end{pmatrix}\right)\right)&=& \begin{pmatrix}
{\mf{K}^{\sigma,\sigma'}(\alpha^\eta_s(b))} &  {\ma K^{\sigma^*}({\eta^*_s}(x^*))} \\
 \ma K^{\sigma'}(\eta_s(y)) & \vartheta(Ad\eta_s a)
\end{pmatrix}\\&=&\omega_s \begin{pmatrix}
{\mf{K}^{\sigma,\sigma'}} &  {\ma K^{\sigma^*}} \\
 \ma K^{\sigma'} & \vartheta
\end{pmatrix} \left(\begin{pmatrix}
{b} & {x^*}  \\
{y} & a
\end{pmatrix}\right)\omega^*_s
\end{eqnarray*}
for all $s\in G$, $a\in \ma B^a ({E})$, $b\in \m B$,
$\sigma,\sigma'\in S$ and $x,y\in{E}$.
\end{proof}

\section{Application to the dilation theory of CPD-kernels}\label{sec3}

Suppose ${E}$ and ${F}$ are Hilbert $C^*$-modules over $C^*$-algebras
$\m B$ and $\m C$ respectively. Let $S$ be a set and let
$\mf{K}:S\times S\to \ma B(\m B,\m C)$ be a CPD-kernel. Let $\{\ma
K^{\sigma}\}_{\sigma\in S}$ be a $\mf{K}$-family where ${\ma
K}^\sigma$ is a map from ${E}$ to ${F}$ for each $\sigma\in S$. Recall that
there exists the Kolmogorov decomposition $(\m F,\mf{i})$ of $\mf{K}$. From
Theorem \ref{thm1} it follows that there is an isometry $\nu:{E}\bigodot\m F\to{F}$
such that
\[
  \nu(x\odot \mf{i}(\sigma))=\ma{K^{\sigma}}(x)~\mbox{for all}~x\in{E},~\sigma\in S.
 \]
 If $F_{\ma K}$ is complemented in $F$, then we obtain a $*$-homomorphism $\vartheta$ from
$\ma B^a (E)$ to $\ma B^a (F)$ defined by $\nu(\bullet \odot id_{\m
F})\nu^{*}$. Also, if $\xi$ is a unit vector in $E$, i.e., $\langle
\xi,\xi\rangle=1$, then the following diagram commutes.
\begin{eqnarray}\label{diag6}
\xymatrix{ \m B \ar@{->}[r]^{ \mf{K}^{\sigma,\sigma'}}
     \ar@{->}[d]_{\xi\bullet \xi^*}
&\m C
     \ar@{<-}[d]^{\langle \nu(\xi\odot \mf{i}(\sigma)),\bullet \nu(\xi\odot \mf{i}(\sigma'))\rangle}
\\
\ma B^a (E) \ar@{->}[r]^{\vartheta}
     & \ma B^a (F)
}
\end{eqnarray}

\noindent Here $b\mapsto \xi b\xi^*$ is a representation of $\m B$ on $E$.
In fact, to obtain the above commuting diagram, it is sufficient to assume that there exist a $C^*$-correspondence $\m F$ from $\m B$ to $\m C$,
 a map $\mf{i}: S\to \m F$, a Hilbert $\m B$-module $E$, an adjointable isometry $\nu:E\bigodot \m F\to F$ and a unit vector $\xi\in E$. For this we set $\mf{K}^{\sigma,\sigma'}:=\langle \mf{i}(\sigma),\bullet
\mf{i}(\sigma')\rangle$ for $\sigma,\sigma'\in S$ and $\vartheta:=\nu(\bullet \odot id_{\m F})\nu^{*}$.

If $\mf{i}(\sigma)$'s are also unit vectors, then $\mf{K}^{\sigma,\sigma'}$ is a unital map for each $\sigma,\sigma'\in S$, and in this case  we say that kernel
$\mf{K}$ is {\it Markov}  and the dilation $\vartheta$ of $\mf{K}$ is a {\it weak dilation}.
Change the map $\xi \bullet \xi^*$ by the map $\langle \xi, \bullet
\xi\rangle$ and reverse the arrow of this map. Now substitute $\ma
K^\sigma (\xi)=\nu (\xi\odot \mf{i}(\sigma))$ in the above diagram
to get the commuting diagram:

\begin{eqnarray}\label{diag7}
\xymatrix{ \m B \ar@{->}[r]^{ \mf{K}^{\sigma,\sigma'}}
     \ar@{<-}[d]_{\langle \xi,\bullet \xi\rangle}
&\m C
     \ar@{<-}[d]^{\langle \ma K^{\sigma} (\xi),\bullet \ma K^{{\sigma}'}(\xi)\rangle}
\\
\ma B^a (E) \ar@{->}[r]^{\vartheta}
     & \ma B^a (F)
}
\end{eqnarray}

\noindent This motivates us to introduce a notion of dilation of a CPD-kernel $\mf{K}$ over $S$ whenever there is a family of maps  
$\{\ma K^{\sigma}\}_{\sigma\in S}$ between some Hilbert $C^*$-modules
and there is a similar commuting diagram as above.

\begin{definition}
  Let ${E}$ and ${F}$ be Hilbert $C^*$-modules over $C^*$- algebras $\m B$ and $\m C$ respectively. Let $S$ be a set and let $\mf{K}:S\times S\to \ma B(\m B,\m C)$ 
be a CPD-kernel. A $*$-homomorphism $\vartheta: \ma B^a (E)\to \ma B^a (F)$ is a {\rm CPDH-dilation} of $\mf{K}$ if $E$ is full and if there is a linear map 
${\ma K}^\sigma$ from ${E}$ to ${F}$ for each $\sigma\in S$ such that
\begin{eqnarray}\label{diag1}
\xymatrix{ \m B \ar@{->}[r]^{ \mf{K}^{\sigma,\sigma'}}
     \ar@{<-}[d]_{\langle x,\bullet x'\rangle}
&\m C
     \ar@{<-}[d]^{\langle \ma K^{\sigma} (x),\bullet \ma K^{{\sigma}'}(x')\rangle}
\\
\ma B^a (E) \ar@{->}[r]^{\vartheta}
     & \ma B^a (F)
}
\end{eqnarray}
commutes for all $x,x'\in E$. The CPDH-dilation $\vartheta$ is called
\begin{itemize}
 \item [(a)] {\rm quasi-dilation} if $E$ is not
necessarily full.
\item [(b)] {\rm strict} if the $*$-homomorphism $\vartheta$ is strict.
\item [(c)] {\rm CPDH$_0$-dilation} if $\vartheta$ is a unital $*$-homomorphism.
\end{itemize}
\end{definition}

\begin{proposition}\label{prop1}
 Let $\vartheta$ be a CPDH$_0$-quasi-dilation of a CPD-kernel $\mf{K}:S\times S\to \ma B(\m B,\m C)$.  If $\{\ma K^{\sigma}\}_{\sigma\in S}$ is a family of maps from $E$ to $F$ such that the Diagram \ref{diag1} commutes, then $\{\ma K^{\sigma}\}_{\sigma\in S}$ is a $\mf{K}$-family where $$\ma K^{\sigma} (ax)=\vartheta(a) \ma K^{\sigma} (x)~\mbox{for}~x\in E,~a\in \ma B^a (E),~\sigma\in S.$$
\end{proposition}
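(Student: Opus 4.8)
The plan is to exploit the commuting Diagram \ref{diag1} together with the fact that $\vartheta$ is a unital $*$-homomorphism, so that for a unit vector the "rank-one" operators $x\langle x',\cdot\rangle$ (and their images under $\vartheta$) can be used to recover the inner products $\langle \ma K^\sigma(x),\ma K^{\sigma'}(x')\rangle$. First I would record what the commuting square says: for all $x,x'\in E$ and all $b\in\m B$,
\[
 \langle \ma K^\sigma(x),\, \vartheta(\xi b\xi^{*})\, \ma K^{\sigma'}(x')\rangle
 = \mf{K}^{\sigma,\sigma'}\big(\langle x, \xi b\xi^{*} x'\rangle\big),
\]
wait — more precisely the diagram as stated reads $\langle \ma K^\sigma(x),\vartheta(a)\ma K^{\sigma'}(x')\rangle=\mf{K}^{\sigma,\sigma'}(\langle x,ax'\rangle)$ for all $a\in\ma B^a(E)$, $x,x'\in E$. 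Since $\vartheta$ is unital, taking $a=\mathrm{id}_E$ already gives
\[
 \langle \ma K^\sigma(x),\ma K^{\sigma'}(x')\rangle=\mf{K}^{\sigma,\sigma'}(\langle x,x'\rangle)\quad\text{for all }x,x'\in E,\ \sigma,\sigma'\in S,
\]
which is exactly the $\mf{K}$-family identity. So the first (and essentially only) substantive point is to verify that the commuting of Diagram \ref{diag1}, spelled out for the operator $a=\mathrm{id}_E$, literally is the defining relation of a $\mf{K}$-family; this is immediate once one reads off the two composites in the square.

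Next I would establish the left $\ma B^a(E)$-linearity assertion $\ma K^\sigma(ax)=\vartheta(a)\ma K^\sigma(x)$. The idea is to compute $\|\ma K^\sigma(ax)-\vartheta(a)\ma K^\sigma(x)\|^2$ by expanding the inner product and using the $\mf{K}$-family identity just obtained together with the commuting diagram applied to $\vartheta(a)$, $\vartheta(a^{*}a)$ etc. Concretely, expanding
\[
 \langle \ma K^\sigma(ax)-\vartheta(a)\ma K^\sigma(x),\ \ma K^\sigma(ax)-\vartheta(a)\ma K^\sigma(x)\rangle
\]
yields four terms: $\mf{K}^{\sigma,\sigma}(\langle ax,ax\rangle)$, $-\langle\ma K^\sigma(ax),\vartheta(a)\ma K^\sigma(x)\rangle$, its adjoint, and $\langle\vartheta(a)\ma K^\sigma(x),\vartheta(a)\ma K^\sigma(x)\rangle=\langle\ma K^\sigma(x),\vartheta(a^{*}a)\ma K^\sigma(x)\rangle$. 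Using Diagram \ref{diag1} (with the operators $a$, $a^{*}a$, $\langle x,a^{*}ax\rangle$) each of these four terms equals $\mf{K}^{\sigma,\sigma}(\langle x,a^{*}ax\rangle)$ with appropriate signs, so the sum telescopes to $0$, forcing $\ma K^\sigma(ax)=\vartheta(a)\ma K^\sigma(x)$. Here one has to be slightly careful about whether $\m B$ is unital and about the adjointability of $\vartheta(a)$, but since $\vartheta$ maps into $\ma B^a(F)$ this is fine, and the identity $\langle \vartheta(a)y,\vartheta(a)y'\rangle=\langle y,\vartheta(a^{*}a)y'\rangle$ holds for $y,y'\in F$ because $\vartheta$ is a $*$-homomorphism.

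The only mild obstacle I anticipate is bookkeeping: making sure that the map $\langle x,\bullet\, x'\rangle$ in the diagram is interpreted as $a\mapsto\langle x,ax'\rangle$ on all of $\ma B^a(E)$ (not merely on the image of $b\mapsto \xi b\xi^{*}$), so that one is entitled to plug in $a=\mathrm{id}_E$ and $a=a^{*}a$; this is exactly what the definition of CPDH-dilation provides ("Diagram \ref{diag1} commutes for all $x,x'\in E$", with the vertical arrows being the stated maps on $\ma B^a(E)$). Once that is granted, both conclusions — that $\{\ma K^\sigma\}_{\sigma\in S}$ is a $\mf{K}$-family and that it is $\ma B^a(E)$-linear in the stated sense — drop out of the two short computations above, and fullness of $E$ plays no role in this particular proposition (it is built into the definition of CPDH-dilation but is not needed for the argument).
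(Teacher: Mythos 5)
Your proof is correct. The first half is exactly the paper's argument: commutativity of Diagram \ref{diag1} at $a=\mathrm{id}_E$, together with unitality of $\vartheta$, is literally the $\mf{K}$-family identity. For the intertwining relation $\ma K^{\sigma}(ax)=\vartheta(a)\ma K^{\sigma}(x)$ you take a slightly different route: you expand $\|\ma K^{\sigma}(ax)-\vartheta(a)\ma K^{\sigma}(x)\|^{2}$ into four terms, each of which the diagram and the $\mf{K}$-family identity evaluate to $\mf{K}^{\sigma,\sigma}(\langle ax,ax\rangle)$, so the sum telescopes to zero. The paper instead rewrites the diagram as $\langle \ma K^{\sigma}(x),\vartheta(a)\ma K^{\sigma'}(x')\rangle=\langle \ma K^{\sigma}(x),\ma K^{\sigma'}(ax')\rangle$, uses this to define an auxiliary $*$-homomorphism $\vartheta_{\ma K}$ on $F_{\ma K}$ by $\vartheta_{\ma K}(a)\ma K^{\sigma}(x)=\ma K^{\sigma}(ax)$, and then identifies $\vartheta_{\ma K}(a)$ with $\vartheta(a)$ on $F_{\ma K}$ by comparing inner products against elements of $F_{\ma K}$. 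Your version is, if anything, tighter: the paper's final step (passing from equality of inner products against $F_{\ma K}$ to equality of the vectors) tacitly needs $\vartheta(a)F_{\ma K}\subseteq F_{\ma K}$ or an equivalent norm argument, and the well-definedness of $\vartheta_{\ma K}$ itself requires essentially the computation you perform; your direct expansion settles both points at once. The extra payoff of the paper's formulation is that it exhibits $F_{\ma K}$ as a $\vartheta$-invariant submodule carrying the induced homomorphism $\vartheta_{\ma K}$, which is what gets reused in the subsequent proposition on strict CPDH$_0$-families.
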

\begin{proof}
Since the Diagram \ref{diag1} commutes, for $x\in E$, $a\in \ma B^a (E)$ and $\sigma,\sigma'\in S$ we get
\begin{eqnarray}\label{eqn**}
\langle \ma{K}^\sigma (x),\vartheta(a) \ma{K}^{\sigma'} (x')\rangle=\langle \ma{K}^\sigma (x),\ma{K}^{\sigma'} (ax')\rangle.
 \end{eqnarray}
 As $\vartheta$ is unital, $\{\ma K^{\sigma}\}_{\sigma\in S}$ is a $\mf{K}$-family. So by setting
 $F_{\ma K}:=[\{\ma K^\sigma(e)c:e\in E,~c\in \m C,~\sigma\in S\}]$ and using equation \ref{eqn**} we get a $*$-homomorphism
$\vartheta_{\ma K}:\ma B^a (E)\to \ma B^a ({F}_{\ma K})$ which is defined by
$\vartheta_{\ma K}(a) \ma{K}^\sigma (x)=\ma{K}^\sigma (ax)~\mbox{for}~x\in E,~a\in \ma B^a (E),~\sigma,\sigma'\in S.$
We get $$\langle y,\vartheta_{\ma K}(a) y'\rangle=\langle y,\vartheta(a) y'\rangle~\mbox{for all}~a\in \ma B^a (E)~\mbox{and}~y,y'\in {F}_{\ma K}.$$
Thus, $\vartheta(a)y=\vartheta_{\ma K}(a)y$ for all $y\in {F}_{\ma K}$ and $a\in \ma B^a (E)$.
\end{proof}

\begin{definition}
 Let $\mf{K}:S\times S\to \ma B(\m B,\m C)$ be a CPD-kernel. A  family of maps $\{\ma K^{\sigma}\}_{\sigma\in S}$
from $E$ to $F$ is called {\rm (strict) CPDH$_0$-family},
if it extends as a CPD-kernel over $S$ from $\m L_{{E}}$ to $\m
L_{{F}}$  whose $(2,2)$-corner is a unital (strict)
$*$-homomorphism.
\end{definition}

\begin{proposition}
Let $\m B$ be unital. If $\vartheta$ is a strict
CPDH$_0$-dilation of a CPD-kernel $\mf{K}:S\times S\to \ma B(\m B,\m
C)$ and  $\{\ma K^{\sigma}\}_{\sigma\in S}$ is a family of maps
from $E$ to $F$ such that the Diagram \ref{diag1} commutes, then
$\{\ma K^{\sigma}\}_{\sigma\in S}$ is a strict CPDH$_0$-family.
\end{proposition}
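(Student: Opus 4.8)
The plan is to combine the two structural results already available in the excerpt: the characterization theorem for $\mf{K}$-families (whose part (b) says that a $\mf{K}$-family on a full module extends to a CPD-kernel between linking algebras with homomorphic $(2,2)$-corner) together with Proposition \ref{prop1}, which guarantees that under a commuting Diagram \ref{diag1} the family $\{\ma K^\sigma\}_{\sigma\in S}$ is automatically a $\mf{K}$-family and intertwines $\vartheta$ with the left action on $F_{\ma K}$. Since $\vartheta$ is assumed to be a genuine CPDH-dilation (not merely a quasi-dilation), $E$ is full, so the hypotheses of the characterization theorem are met.

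First I would invoke Proposition \ref{prop1}: commutativity of Diagram \ref{diag1} and unitality of $\vartheta$ give that $\{\ma K^\sigma\}_{\sigma\in S}$ is a $\mf{K}$-family and that $\ma K^\sigma(ax)=\vartheta(a)\ma K^\sigma(x)$ for all $a\in\ma B^a(E)$, $x\in E$, $\sigma\in S$; in particular each $\ma K^\sigma$ is left $\ma B^a(E)$-linear. Next, because $E$ is full, I would apply the equivalence (a)$\Leftrightarrow$(b) of the characterization theorem to the $\mf{K}$-family $\{\ma K^\sigma\}_{\sigma\in S}$: this produces the block-wise bounded maps $\begin{pmatrix}\mf{K}^{\sigma,\sigma'} & \ma K^{\sigma^*}\\ \ma K^{\sigma'} & \vartheta'\end{pmatrix}$ from $\m L_E$ to $\m L_{F_{\ma K}}$ forming a CPD-kernel over $S$ whose $(2,2)$-corner $\vartheta'$ is a $*$-homomorphism. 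The point of Proposition \ref{prop1} is that this $\vartheta'$, built from the Kolmogorov decomposition as $\nu(\bullet\odot\mathrm{id}_{\m F})\nu^*$, coincides with $\vartheta_{\ma K}$ and hence agrees with the restriction of $\vartheta$ to $F_{\ma K}$; moreover, since $\vartheta$ is unital, $\vartheta'$ is unital. This shows $\{\ma K^\sigma\}_{\sigma\in S}$ is a CPDH$_0$-family.

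It remains to upgrade this to a \emph{strict} CPDH$_0$-family, i.e.\ to show that the $(2,2)$-corner is in addition a strict $*$-homomorphism. Here the hypothesis that $\vartheta$ is a \emph{strict} CPDH-dilation enters: strictness of $\vartheta:\ma B^a(E)\to\ma B^a(F)$ means $\vartheta$ is continuous on bounded sets for the strict topologies, equivalently (since $\m B$ is unital, so $E$ is finitely generated-like enough that $\ma K(E)$ has an approximate unit behaving well) $\vartheta$ is nondegenerate in the appropriate sense. I would argue that the restriction/corestriction of $\vartheta$ to $F_{\ma K}$, which is exactly the $(2,2)$-corner produced above, inherits strictness: $F_{\ma K}$ is $\vartheta(\ma B^a(E))$-invariant, it is a complemented-or-at-least-closed submodule generated by the ranges of the $\ma K^\sigma$, and strictness of a homomorphism is preserved under corestriction to an invariant closed submodule together with the density statement $F_{\ma K}=[\ma K^\sigma(E)\m C]$. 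Concretely, one checks $\vartheta|_{F_{\ma K}}(\ma K(E))F_{\ma K}$ is dense in $F_{\ma K}$ using $\ma K^\sigma(ax)=\vartheta(a)\ma K^\sigma(x)$ and an approximate unit for $\ma K(E)$ coming from the fullness of $E$.

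The main obstacle I expect is precisely this last passage: verifying that strictness of $\vartheta$ on all of $\ma B^a(F)$ descends to strictness of the corner $\vartheta_{\ma K}$ valued in $\ma B^a(F_{\ma K})$, since in general a submodule on which a strict homomorphism acts need not inherit strictness unless one knows the submodule is suitably large (e.g.\ essential) for the action. I would handle this by exploiting $F_{\ma K}=[\ma K^\sigma(E)\m C\,:\,\sigma\in S]$ together with the intertwining identity $\ma K^\sigma(ax)=\vartheta(a)\ma K^\sigma(x)$ and an approximate unit $(u_\lambda)$ of $\ma K(E)\subset\ma B^a(E)$ (available because $E$ is full): then $\vartheta(u_\lambda)\ma K^\sigma(x)c=\ma K^\sigma(u_\lambda x)c\to\ma K^\sigma(x)c$, showing the action of $\ma K(E)$ on $F_{\ma K}$ via $\vartheta$ is nondegenerate, which is the definition of strictness for the corner homomorphism; the remaining continuity-on-bounded-sets part is then automatic from that of $\vartheta$.
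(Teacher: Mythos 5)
There is a genuine gap, and it lies in what the target of the extension has to be. The paper's definition of a (strict) CPDH$_0$-family asks that $\{\ma K^{\sigma}\}_{\sigma\in S}$ extend to a CPD-kernel over $S$ from $\m L_{E}$ to $\m L_{F}$ — the linking algebra of all of $F$ — whose $(2,2)$-corner is a \emph{unital} (strict) $*$-homomorphism $\ma B^a(E)\to\ma B^a(F)$; the only candidate for that corner is $\vartheta$ itself. Your route via the characterization theorem (a)$\Rightarrow$(b) only ever produces a CPD-kernel from $\m L_{E}$ to $\m L_{F_{\ma K}}$, whose corner $\vartheta'$ acts on $F_{\ma K}$. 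Even after you identify $\vartheta'$ with the compression of $\vartheta$ to $F_{\ma K}$ and check unitality and nondegeneracy there, you have proved a weaker statement: nothing in your argument touches coefficients $\bigl(\begin{smallmatrix}c\\ y\end{smallmatrix}\bigr)$ with $y\in F\setminus F_{\ma K}$, so the required positivity $\sum_{i,j}C_i^*\,\mathfrak{L}^{\sigma_i,\sigma_j}(B_i^*B_j)\,C_j\ge 0$ with $C_i\in\m L_{F}$ is not established. Since $F_{\ma K}$ need not be complemented in $F$, one cannot simply split off the orthogonal piece and add back $\vartheta$ on it.

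This is also exactly the point where the strictness hypothesis is actually consumed in the paper, and your proposal uses it in the wrong place. The paper invokes the representation theorem of Muhly–Skeide–Solel for the \emph{strict} unital homomorphism $\vartheta$ to obtain a correspondence $\m F_{\vartheta}=E^*\bigodot_{\vartheta}F$ and a unitary $\nu:E\bigodot\m F_{\vartheta}\to F$ onto all of $F$ with $\vartheta=\nu(\bullet\odot id_{\m F_{\vartheta}})\nu^*$; it then embeds the Kolmogorov module $\m F_{\mf K}$ into $\m F_{\vartheta}$ via Proposition \ref{prop1} (so that $\mf i(\sigma)\in\m F_{\vartheta}$ and $\ma K^{\sigma}(x)=\nu(x\odot\mf i(\sigma))$, using fullness of $E$ and unitality of $\m B$), and finally verifies the CPD property over the full $\m L_{F}$ by the explicit inner-product computation with arbitrary $y_1,y_2\in F$. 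In your proposal the strictness of $\vartheta$ is instead spent on re-deriving strictness of the compressed corner $\vartheta_{\ma K}$ on $F_{\ma K}$ via an approximate unit of $\ma K(E)$ — a correct but tangential observation, since the corner the conclusion needs is $\vartheta$ on $\ma B^a(F)$, which is strict by hypothesis and requires no descent argument. To repair the proof you would need to bring in the MSS factorization (or an equivalent device) so that the kernel can be written as $\bigl(\begin{smallmatrix}\mf i(\sigma)&\\&\nu^*\end{smallmatrix}\bigr)^*(\bullet\odot id_{\m F_{\vartheta}})\bigl(\begin{smallmatrix}\mf i(\sigma')&\\&\nu^*\end{smallmatrix}\bigr)$ on all of $\m L_{F}$, which is precisely what the paper does.
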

\begin{proof}
 Let $(\m F_{\mf{K}}, \mf{i})$ be the Kolmogorov decomposition of the CPD-kernel $\mf{K}:S\times S\to \ma B(\m B,\m C)$. 
Because $\vartheta$ is a strict unital homomorphism from $\ma B^a (E)$ into $\ma B^a (F)$ using the representation theorem (Theorem 1.4) of \cite{MSS06}, 
we obtain a $C^*$-correspondence $\m F_{\vartheta}:=E^* \bigodot_{\vartheta} F$ from $\m B$ to $\m C$ and a unitary $\nu : E\bigodot \m F_{\vartheta}\to  F$ defined by $$\nu(x'\odot(x^*\odot y)):=\vartheta(x'x^*)y~\mbox{for all}~x,x'\in E~\mbox{and}~y\in F$$
such that we obtain $\vartheta=\nu(\bullet\odot id_{\m
F_{\vartheta}})\nu^*$. It is immediate from Proposition \ref{prop1} that the map  from $\m F_{\mf{K}}$ onto $E^*\bigodot {F}_{\ma
K}\subset \m F_{\vartheta}$ defined by $\langle
x,x'\rangle\mf{i}(\sigma)\mapsto x^*\odot \ma K^\sigma (x')$ for all
$x,x'\in E$ and $\sigma\in S$, is a bilinear unitary. Now  we  identify $\m
F_{\mf{K}}\subset\m F_{\vartheta}$ and we have $\mf{i}(\sigma) \in
\m F_{\vartheta}$ for all $\sigma\in S$. Further, we get
$$\nu(x\odot\langle
x',x''\rangle\mf{i}(\sigma))=\nu(x\odot(x'^*\odot \ma K^\sigma
(x'')))=\vartheta(xx'^*)\ma K^\sigma (x'')=\ma K^\sigma(x\langle
x',x''\rangle)$$ for all $x,x',x''\in E$, where the last equality follows from Proposition \ref{prop1}. Since $E$ is full and $\m
B$ is unital, we get $\ma K^\sigma (x)=\nu(x\odot \mf{i}(\sigma))$
for $x\in E$.

For each $\sigma\in S$ we have $\begin{pmatrix}
{\mf{i}(\sigma)} & {}  \\
{} & \nu^*
\end{pmatrix}\in \ma B^r \begin{pmatrix}
\begin{pmatrix}{\m C} \\F \end{pmatrix} ,\begin{pmatrix}{\m B}\\E \end{pmatrix}\bigodot \m F_{\vartheta}
\end{pmatrix}$. Since \\
$  \left(\begin{pmatrix}
{b} & {{x}^*}  \\
{x'} & a
\end{pmatrix}\odot id_{\m F_{\vartheta}}
 \right)\begin{pmatrix}
{\mf{i}(\sigma)} & {}  \\
{} & \nu^*
\end{pmatrix}\begin{pmatrix}{c} \\{y} \end{pmatrix} =\begin{pmatrix}
{b\mf{i}(\sigma) c+(x^*\odot id_{\m F_{\vartheta}})\nu^* y}   \\
{x'\odot \mf{i}(\sigma) c+(a\odot id_{\m F_{\vartheta}})\nu^* y}
\end{pmatrix}$
we have
\begin{eqnarray*}
 &&\left<\left(\begin{pmatrix}
{b_1} & {{x}^*_1}  \\
{x'_1} & a_1
\end{pmatrix}\odot id_{\m F_{\vartheta}}
 \right)\begin{pmatrix}
{\mf{i}(\sigma)} & {}  \\
{} & \nu^*
\end{pmatrix}\begin{pmatrix}{c_1} \\{y_1} \end{pmatrix} ,  \left(\begin{pmatrix}
{b_2} & {{x}^*_2}  \\
{x'_2} & a_2
\end{pmatrix}\odot id_{\m F_{\vartheta}}
 \right)\begin{pmatrix}
{\mf{i}(\sigma')} & {}  \\
{} & \nu^*
\end{pmatrix}\begin{pmatrix}{c_2} \\{y_2} \end{pmatrix}\right>
 \end{eqnarray*}
\begin{eqnarray*}
&=& c^*_1\langle\mf{i}(\sigma),b^*_1b_2\zeta_j\rangle c_2
+c^*_1\langle \mf{i}(\sigma),b^*_1 (x^*_2 \odot id_{\m
F_{\vartheta}})\nu^* y_2\rangle+\langle (x^*_1\odot id_{\m
F_{\vartheta}})\nu^* y_1,b_2\mf{i}(\sigma')\rangle c_2
\\ &&+ \langle (x^*_1\odot id_{\m F_{\vartheta}})\nu^* y_1,(x^*_2\odot id_{\m F_{\vartheta}})\nu^* y_2\rangle
+c^*_1\langle
x'_1\odot\mf{i}(\sigma),x'_2\odot\mf{i}(\sigma')\rangle c_2
\\ &&+ c^*_1\langle x'_1\odot\mf{i}(\sigma),(a_2\odot id_{\m F_{\vartheta}})\nu^* y_2\rangle +\langle(a_1 \odot id_{\m F_{\vartheta}})\nu^* y_1,x'_2\odot \mf{i}(\sigma')\rangle c_2
\\&&+\langle(a_1 \odot id_{\m F_{\vartheta}})\nu^* y_1,(a_2\odot id_{\m F_{\vartheta}})\nu^* y_2\rangle
\\ &=& c^*_1\mf{K}^{\sigma,\sigma'}(b^*_1 b_2)c_2 +c^*_1\langle \ma K^{\sigma} (x_2 b_1),y_2\rangle+\langle y_1, \ma K^{\sigma'}(x_1 b_2)\rangle c_2+\langle y_1, \vartheta(x_1 x^*_2)y_2\rangle
\\ &&+ c^*_1\mf{K}^{\sigma,\sigma'}(\langle x'_1, x'_2\rangle)c_2+c^*_1\langle \ma K^{\sigma} (a^*_2 x'_1),y_2\rangle+\langle y_1, \ma K^{\sigma'}(a^*_1 x'_2)\rangle c_2+\langle y_1, \vartheta(a^*_1 a_2)y_2\rangle
\\ &=& \left<\begin{pmatrix}{c_1} \\{y_1} \end{pmatrix},
 \begin{pmatrix}
{\mf{K}^{\sigma,\sigma'}} &  {\ma K^{\sigma^*}} \\
 \ma K^{\sigma'} & \vartheta
\end{pmatrix} \left(\begin{pmatrix}
{b_1} & {{x}^*_1}  \\
{x'_1} & a_1
\end{pmatrix}^*\begin{pmatrix}
{b_2} & {{x}^*_2}  \\
{x'_2} & a_2
\end{pmatrix} \right)   \begin{pmatrix}{c_2} \\{y_2} \end{pmatrix}\right>
\end{eqnarray*}
for all $x_1,x_2,x'_1,x'_2\in E,~ b_1,b_2\in \m B,~c_1,c_2\in\m
C,~y_1,y_2\in F,~a_1,a_2\in \ma B^a (E)$. Therefore
$\begin{pmatrix}
{\mf{K}^{\sigma,\sigma'}} &  {\ma K^{\sigma^*}} \\
 \ma K^{\sigma'} & \vartheta
\end{pmatrix}$ forms a CPD-kernel and hence $\{\ma K^{\sigma}\}_{\sigma\in S}$ is a strictly CPDH$_0$-family.
\end{proof}

We further generalize the notion of CPDH-dilation as follows:

\begin{definition}
Suppose ${E}$ and ${F}$ are Hilbert $C^*$-modules over $C^*$-algebras $\m B$ and $\m C$ respectively. Let $\mf{K}:S\times S\to \ma B(\m B,\m C)$ be a
  CPD-kernel. Let $\mf{P}$ be a CPD-kernel over the set $E$ from $\ma B^a (E)$ to $\m B$ and let $\mf{L}$ be a CPD-kernel
  over the set $\{\ma K^{\sigma}(x):\sigma\in S,x\in E\}$ from $\ma B^a (F)$ to $\m C$. A homomorphism $\vartheta: \ma B^a (E)\to \ma B^a (F)$ is called a
{\rm generalized CPDH-dilation} of $\mf{K}$ if $E$
  is full and if $\{\ma K^{\sigma}\}_{\sigma\in S}$ is a collection of linear maps from ${E}$ to ${F}$ such that the following diagram commutes for all
  $x,x'\in E$ and $\sigma,\sigma'\in S$:
\begin{eqnarray}\label{diag3}
\xymatrix{ \m B \ar@{->}[r]^{ \mf{K}^{\sigma,\sigma'}}
     \ar@{<-}[d]_{{\mf P}^{x, x'}}
&\m C
     \ar@{<-}[d]^{\mf{L}^{\ma K^{\sigma}(x),\ma K^{\sigma'}(x')}}
\\
\ma B^a (E) \ar@{->}[r]^{\vartheta}
     & \ma B^a (F)
}
\end{eqnarray}
The generalized CPDH-dilation $\theta$ is called {\rm quasi-dilation} if $E$ is not
necessarily full.
\end{definition}

Let $\mf{L}$ be  a CPD-kernel over the set $S'=\{\ma K^{\sigma}(x):\sigma\in S,x\in E\}$ from a unital $C^*$-algebra $\ma
B^a (F)$ to a $C^*$-algebra $\m C$. We get the Kolmogorov
decomposition $(\m F, \mf{i})$ such that
$$\langle \mf{i}(y), a \mf{i}(y')\rangle=\mf{L}^{y,y'}(a)~\mbox{for all $y,y'\in S'$, $a\in \ma B^a (F)$}~$$
and  $\m F=[\{a \mf{i}(y) c:a\in \ma B^a (F),~y\in S',~c\in
C\}]$. Hence we get $$\mf{K}^{\sigma,\sigma'}({\mf P}^{x,
x'}(a))=\langle \mf{i}(\ma K^{\sigma}(x)),\vartheta(a)\mf{i}(\ma
K^{\sigma'}(x'))\rangle$$ for each $\sigma,\sigma'\in S$, $x,x'\in
E$ and $a\in \ma B^a (F)$.
We denote the homomorphism which gives the left action on $\m F$ by
$\theta:\ma B^a (F)\to \ma B^a (\m F)$. Observe that the following diagram
commutes for all $x,x'\in E$ and  $\sigma,\sigma'\in S$:
\begin{eqnarray*}\label{diag4}
\xymatrix{ \m B \ar@{->}[r]^{ \mf{K}^{\sigma,\sigma'}}
     \ar@{<-}[d]_{{\mf P}^{x, x'}}
&\m C
     \ar@{<-}[d]^{\langle \mf{i}(\ma K^{\sigma} (x)),\bullet \mf{i}(\ma K^{\sigma'}(x'))\rangle}
\\
\ma B^a (E) \ar@{->}[r]^{\theta\circ \vartheta}
     & \ma B^a (\m F)
}
\end{eqnarray*}

\begin{proposition}
Suppose ${E}$ and ${F}$ are Hilbert $C^*$-modules over $C^*$-algebras $\m B$ and $\m C$ respectively. Let $\mf{K}:S\times S\to \ma B(\m B,\m C)$ be a CPD-kernel. 
Let $\mf{P}$ be a CPD-kernel over the set $E$ from $\ma B^a (E)$ to $\m B$ defined by $\mf{P}^{x,x'}:=\langle x,\bullet x'\rangle$
 where $x,x'\in E$ and let $\mf{L}$ be a CPD-kernel over the set $\{\ma K^{\sigma}(x):\sigma\in S,x\in E\}$ from $\ma B^a (F)$ to $\m C$. 
If $\vartheta: \ma B^a (E)\to \ma B^a (F)$ is a generalized
 quasi-CPDH-dilation of $\mf{K}$ with respect to CPD-kernels $\mf{P}$ and $\mf{L}$, then
 $\theta\circ \vartheta:\ma B^a (E)\to \ma B^a (\m F)$ is a quasi-CPDH-dilation of $\mf{K}$ with respect to maps
 $\{\mf{i}\circ\ma K^{\sigma} :E\to \m F\}_{\sigma\in S}$ where $(\m F,\mf{i})$ is the Kolmogorov decomposition of $\mf L$ and
$\theta:\ma B^a (F)\to \ma B^a (\m F)$ is a homomorphism which gives
 the left action on $\m F$.
\end{proposition}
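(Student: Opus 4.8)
The plan is to observe that the commuting square defining a (quasi-)CPDH-dilation has already been produced in the paragraph immediately preceding the statement, so that the proof reduces to specialising the abstract kernel $\mf P$ there to $\mf P^{x,x'}=\langle x,\bullet\,x'\rangle$ and reading off the data. First I would record the routine facts. The composite $\theta\circ\vartheta\colon\ma B^a(E)\to\ma B^a(\m F)$ is a homomorphism, being the composite of the homomorphism $\vartheta$ with the homomorphism $\theta$ that implements the left action of the $C^*$-correspondence $\m F$ (the Kolmogorov module of $\mf L$). The maps $\mf i\circ\ma K^\sigma\colon E\to\m F$ make sense because $\ma K^\sigma(x)$ lies in $S':=\{\ma K^\sigma(x):\sigma\in S,\ x\in E\}$, the index set of $\mf L$, and one has $\theta(b)\mf i(y)=b\mf i(y)$ and $\langle\mf i(y),b\mf i(y')\rangle=\mf L^{y,y'}(b)$ for $y,y'\in S'$, $b\in\ma B^a(F)$. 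Feeding $\vartheta(a)$ into the commutativity of Diagram~\ref{diag3} and using these relations yields, exactly as noted before the statement,
\[
\mf K^{\sigma,\sigma'}\bigl(\mf P^{x,x'}(a)\bigr)=\bigl\langle\mf i(\ma K^\sigma(x)),\,(\theta\circ\vartheta)(a)\,\mf i(\ma K^{\sigma'}(x'))\bigr\rangle
\]
for all $x,x'\in E$, $a\in\ma B^a(E)$ and $\sigma,\sigma'\in S$.

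Now I substitute the particular kernel $\mf P^{x,x'}=\langle x,\bullet\,x'\rangle$; the left-hand side becomes $\mf K^{\sigma,\sigma'}(\langle x,ax'\rangle)$ and the identity reads
\[
\mf K^{\sigma,\sigma'}(\langle x,ax'\rangle)=\bigl\langle(\mf i\circ\ma K^\sigma)(x),\,(\theta\circ\vartheta)(a)\,(\mf i\circ\ma K^{\sigma'})(x')\bigr\rangle .
\]
This is precisely the statement that Diagram~\ref{diag1} commutes once $F$, $\vartheta$, $\{\ma K^\sigma\}$ are replaced by $\m F$, $\theta\circ\vartheta$, $\{\mf i\circ\ma K^\sigma\}$. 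Since $\vartheta$ was only assumed to be a quasi-dilation, $E$ need not be full, so $\theta\circ\vartheta$ is a quasi-CPDH-dilation of $\mf K$ implemented by the family $\{\mf i\circ\ma K^\sigma\}_{\sigma\in S}$, with $\theta$ the left action of $\m F$, as asserted.

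The step I expect to cost genuine work is checking that each $\mf i\circ\ma K^\sigma$ is \emph{linear}, as the definition of a CPDH-dilation requires; since $\ma K^\sigma$ is linear, the point is that the Kolmogorov map $\mf i$ is a priori defined only on the set $S'$ and need not respect linear combinations. I would attack it by fixing $x_1,x_2\in E$ and scalars $\mu,\nu$, putting
\[
d:=(\mf i\circ\ma K^\sigma)(\mu x_1+\nu x_2)-\mu\,(\mf i\circ\ma K^\sigma)(x_1)-\nu\,(\mf i\circ\ma K^\sigma)(x_2)\in\m F,
\]
and using the conjugate-linearity of $x\mapsto\langle x,ax'\rangle$ together with the linearity of $\mf K^{\sigma,\sigma'}$ in the displayed identity to obtain $\langle d,\,(\theta\circ\vartheta)(a)\,\mf i(\ma K^{\sigma'}(x'))\,c\rangle=0$ for all $a\in\ma B^a(E)$, $\sigma'\in S$, $x'\in E$ and $c\in\m C$. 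The crux is then to see that these vectors $\vartheta(a)\mf i(\ma K^{\sigma'}(x'))c$ generate $\m F$, which forces $d=0$; this is the only spot where one has to work a little, and granting it the proposition is a formal consequence of the material already developed.
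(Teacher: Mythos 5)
Your first two paragraphs reproduce exactly the paper's own (tacit) proof: the proposition is justified there solely by the observation, made in the paragraph immediately preceding the statement, that $\mf{K}^{\sigma,\sigma'}(\mf{P}^{x,x'}(a))=\langle \mf{i}(\ma K^{\sigma}(x)),\vartheta(a)\mf{i}(\ma K^{\sigma'}(x'))\rangle$, which, after substituting $\mf{P}^{x,x'}=\langle x,\bullet\,x'\rangle$ and rewriting $\vartheta(a)\mf{i}(\cdot)$ as $(\theta\circ\vartheta)(a)\mf{i}(\cdot)$ via the left action, is precisely the commutativity of Diagram \ref{diag1} for the data $(\m F,\ \theta\circ\vartheta,\ \{\mf{i}\circ\ma K^{\sigma}\})$. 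So the core of your argument is correct and is the same route the paper takes.

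The supplementary step you isolate, linearity of $\mf{i}\circ\ma K^{\sigma}$, is a real subtlety that the paper passes over in silence (the definition of a CPDH-dilation does ask for linear maps, and $\mf{i}$ is a priori only a set map on $S'$), and you are right to worry about it; but the argument you sketch for it does not close. The orthogonality you derive shows only that the defect $d$ is orthogonal to the closed submodule generated by the vectors $\theta(\vartheta(a))\,\mf{i}(y')\,c$ with $a\in\ma B^a(E)$, i.e.\ to $[\,\theta(\vartheta(\ma B^a(E)))\,\mf{i}(S')\,\m C\,]$, whereas the Kolmogorov module is $\m F=[\,\ma B^a(F)\,\mf{i}(S')\,\m C\,]$ with the coefficient ranging over \emph{all} of $\ma B^a(F)$. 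Nothing in the hypotheses constrains $\mf{L}^{y,y'}(b)$ for $b$ outside the range of $\vartheta$, so the two submodules can differ, the generation claim you need ("these vectors generate $\m F$") is false in general, and $d$ need not vanish; note also that $d$ itself need not lie in the smaller submodule, so one cannot simply cut $\m F$ down either. In short: if the proposition is read as asserting only that Diagram \ref{diag1} commutes for $\theta\circ\vartheta$ and the maps $\mf{i}\circ\ma K^{\sigma}$, your first two paragraphs already constitute a complete proof identical to the paper's; if it is read as requiring $\mf{i}\circ\ma K^{\sigma}$ to be linear in the sense of the definition, then the remaining step is a genuine gap in your proposal --- and one the paper does not fill either --- which would need an additional hypothesis (for instance, compatibility of $\mf{L}$ with the linear structure of $S'\subset F$, as holds for the kernel $\mf{L}^{y,y'}=\langle y,\bullet\, y'\rangle$) rather than the orthogonality argument you propose.
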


Let $\m B$ be a $C^*$-algebra. Given two CPD-kernels $\mf{K}$ and $\mf{L}$ over a set $S$ on  $\m B$, we define the {\it Schur product} as the kernel 
$\mf{K}\circ\mf{L}$ over $S$ on 
$\m B$ by $(\mf{K}\circ\mf{L})^{\sigma,\sigma'}:=\mf{K}^{\sigma,\sigma'}\circ\mf{L}^{\sigma,\sigma'}$ for $\sigma,\sigma'\in S$. Using the Kolmogorov decomposition 
it is clear that the kernel $\mf{K}\circ\mf{L}$ over $S$ on $\m B$ is a CPD-kernel. Let us denote by $\mathbb{T}$ the semigroup $\mathbb{N}_0$ or $\mathbb{R}_{+}$.  
A collection of CPD-kernels $\{\mf{K}_t\}_{t\in \mathbb T}$ over $S$ on $\m B$ forms a {\it semigroup of CPD-kernels} or {\it CPD-semigroup}
if $\mf{K}_t\circ\mf{K}_s:=\mf{K}_{t+s}$ for $t,s\in \mathbb{T}$. 
The semigroup  is denoted by $\mf{K}=(\mf{K}_t)_{t\in\mathbb{T}}$. We define similar notion of dilation for semigroups of CPD-kernels, as given above for CPD-kernels. 
The theory of CP-semigroups finds significant applications in quantum statistical mechanics, quantum probability theory, etc. and 
many of the aspects of this theory can be extended to CPD-semigroups.

\begin{definition}
Let $E$ be a Hilbert $C^*$-module on a $C^*$-algebra $\m B$, $S$ be a set and $\ma K^{\sigma}_t$  be a map from $E$ to $E$
for each $\sigma\in S$ and $t\in \mathbb{T}$.   A semigroup $\{\{\ma
{K}^{\sigma}_t\}_{\sigma\in S}:t\in \mathbb{T}\}$ is called
  \begin{enumerate}[(a)]
   \item a {\rm CPD-semigroup} on $E$ if it extends to a semigroup of CPD-kernels  $\begin{pmatrix}
{\mf{K}^{\sigma,\sigma'}_t} &  {\ma K^{\sigma^*}_t} \\
 \ma K^{\sigma'}_t & \vartheta_t
\end{pmatrix}$ acting block-wise on the linking algebra of $E$.
\item a {\rm CPDH-semigroup} on $E$ if it is a CPD-semigroup where $\vartheta_t$  can be chosen to form an $E$-semigroup and 
for each $t$, the  kernel $\{\mf{K}^{\sigma,\sigma'}_t: \sigma,\sigma \in S  \}$
 can be chosen such that $\{\ma K^\sigma_t\}_{\sigma\in S}$ is a $\mf K_t$-family.
  \end{enumerate}

\end{definition}

\begin{definition}
  Let ${E}$ be a Hilbert $C^*$-module over a $C^*$-algebra $\m B$. Let $S$ be a set and let $\mf K$ be a CPD-semigroup over a set $S$ on  $\m B$. 
A semigroup of $*$-homomorphisms $\vartheta_t: \ma B^a (E)\to \ma B^a (F)$ for $t\in \mathbb{T}$ is called  a {\rm CPDH-dilation} of $\mf K$ if $E$ is full and if there exists CPDH-semigroup on $E$ consisting of linear maps ${\ma K}^\sigma_t: {E} \to{E}$ for each $\sigma\in S$ such that the diagram

\begin{eqnarray}\label{diag5}
\xymatrix{ \m B \ar@{->}[r]^{ \mf{K}^{\sigma,\sigma'}_t}
     \ar@{<-}[d]_{\langle x,\bullet x'\rangle}
&\m C
     \ar@{<-}[d]^{\langle \ma K^{\sigma}_t (x),\bullet \ma K^{\sigma'}_t(x')\rangle}
\\
\ma B^a (E) \ar@{->}[r]^{\vartheta_t}
     & \ma B^a (F)
}
\end{eqnarray}
commutes for all $x,x'\in E$. We say that the CPDH-dilation is {\rm quasi-dilation} if $E$ is not
necessarily full. Further if each $\vartheta_t$ is strict, then we
call a dilation {\rm strict}. We say that the CPDH-(quasi-) dilation
is {\rm CPDH$_0$-(quasi-) dilation} if each $\vartheta_t$ is unital.
\end{definition}

Now we construct a CPDH-dilation for a given CPD-semigroup using the concept of product systems: 
Let $\m B$ be a $C^*$-algebra and for each $t\in \mathbb{T}$, $E_t$ be a $C^*$-correspondence from $\m B$ to $\m B$ with $E_0=\m B$. The family 
$E^{\odot}=(E_t)_{t\in \mathbb{T}}$ is called a {\it product system of $C^*$-correspondences} if there exists an associative product $$(x_s,y_t)
\mapsto x_sy_t:=u_{s,t}(x_s\odot y_t)\in E_{s+t}~\mbox{for}~x_s\in E_s, y_t\in E_t,~ s,t\in \mathbb{T};$$ where $u_{s,t}:E_s\bigodot
E_t\to E_{s+t}$ are bilinear unitaries, and for each $t\in \mathbb{T}$ maps $u_{0,t}$ and $u_{t,0}$ are left and right actions, respectively. A {\it unit} of 
the product system $E^{\odot}$ is a family $\xi^{\sigma\odot}=(\xi^{\sigma}_t)_{t\in \mathbb{T}}$ for each $\sigma\in S$ satisfying 
$\xi^{\sigma}_s\xi^{\sigma}_t=\xi^{\sigma}_{s+t}$ for $s,t\in \mathbb{T}$. Let $E$ be a full Hilbert $\m B$-module. A {\it} left dilation of $E^{\odot}$ 
to $E$ is a family of unitaries $\nu_t:E\bigodot E_t\to E$ which satisfy the associativity condition $\nu_t(\nu_s(x \odot y_s) \odot z_t)=
\nu_{s+t}(x \odot u_{s,t}(y_s \odot z_t))$.
Let $\mf{K}$ be a CPD-semigroup over a set $S$ on a unital $C^*$-algebra $\m B$. In Section 4.3 of
\cite{BBLS04} it is shown that there exists a product system $E^{\odot}=(E_t)_{t\in \mathbb{T}}$ of
$\m B$-correspondences and there exists a unit $\xi^{\sigma\odot}$ in $E^{\odot}$ such that $\langle
\xi^{\sigma}_t,b \xi^{\sigma'}_t\rangle=\mf{K}^{\sigma,\sigma'}_t(b)$
for all $b\in \m B,~\sigma,\sigma'\in S,~t\in\mathbb{T}$. Let $E$ denote the inductive limit over $E_t$.
For every fix $t\in \mathbb{T}$, the unitary $\nu_t:E\bigodot E_t\to E$ is obtained as a limit of unitaries $u_{s,t}:E_s\bigodot
E_t\to E_{s+t}$ as $s\to\infty$. These unitaries $\nu_t$
form a left dilation of $E^{\odot}$ to
$E$ (cf. Theorem 2.2 of \cite{Sk07}). Therefore  $\vartheta$ on $\ma B^a
(E)$ defined as $\vartheta_t (a)=\nu_t (a\odot id_{t})\nu_t$ for all
$a\in \ma B^a(E), t\in \mathbb{T},$ is  an $E_0$-semigroup. Note that if we set $\ma K^\sigma_t
(x):=\nu_t (x\odot\xi^\sigma_t) $ for all $\sigma\in S$, then the
Diagram \ref{diag5} commutes.

\vspace{0.5cm} {\bf Acknowledgements:} The second author would like to thank K. Sumesh for several discussions.

\bibliographystyle{amsalpha}{
\bibliography{harshbib}}

\vspace{1cm}
\noindent Department of Mathematics, Indian Institute of Technology Bombay, Powai, Mumbai-400076, India\\
Email address: dey@math.iitb.ac.in

\vspace{.5cm}

\noindent Department of Mathematics, Indian Institute of Technology Bombay, Powai, Mumbai-400076, India\\
Email address: harsh@math.iitb.ac.in

\end{document}